\newtheorem{thm}{Theorem}[section]
\newtheorem{cor}[thm]{Corollary}
\newtheorem{lem}[thm]{Lemma}
\newtheorem{prop}[thm]{Proposition}
\newtheorem{quest}{Question}
\newtheorem{fact}[thm]{Fact}
\newtheorem{exam}[thm]{Example}
\theoremstyle{definition}
\newtheorem{defn}{Definition}
\theoremstyle{remark}
\newtheorem{rem}{Remark}
\newcommand{\Sum}{\bigcup}
\newcommand{\sm}{\setminus}
\newcommand{\sbe}{\subseteq}
\newcommand{\ex}{\exists}
\newcommand{\fa}{\forall}
\newcommand{\es}{\emptyset}
\renewcommand{\phi}{\varphi}
\newcommand{\sid}{\sigma\mbox{-ideal}}
\newcommand{\sidi}{\sigma\!\mbox{-}ideal}
\newcommand{\cont}{{\mathfrak c}}
\DeclareMathOperator{\add}{add}
\DeclareMathOperator{\non}{non}
\DeclareMathOperator{\cf}{cf}
\DeclareMathOperator{\Perf}{Perf}
\DeclareMathOperator{\Borel}{Borel}
\DeclareMathOperator{\Open}{Open}
\newcommand{\set}[1]{\{#1\}}
\renewcommand{\AA}{{\mathcal A}}
\newcommand{\BB}{{\mathcal B}}
\newcommand{\DD}{{\mathcal D}}
\newcommand{\II}{{\mathcal I}}
\newcommand{\NN}{{\mathcal N}}
\newcommand{\PP}{{\mathcal P}}
\newcommand{\bbr}{\mathbb{R}}
\newcommand{\bbz}{\mathbb{Z}}
\newcommand{\bbq}{\mathbb{Q}}
\newcommand{\bbk}{\mathbb{K}}
\newcommand{\bbl}{\mathbb{L}}
\newcommand{\Zeberski}{{\.Z}eberski\ }
\newcommand{\Ralowski}{Ra{\l}owski\ }
\begin{document}
\title{Bernstein sets and $\kappa$-coverings}
\author[J. Kraszewski, R. \Ralowski, \;P. Szczepaniak and S. \Zeberski]
       {Jan Kraszewski, Robert Ra{\l}owski,
        Przemys{\l}aw Szczepaniak and  Szymon \Zeberski}
\email[Jan Kraszewski]{jan.kraszewski@math.uni.wroc.pl}
\email[Robert Ra{\l}owski]{robert.ralowski@pwr.wroc.pl}
\email[Przemys{\l}aw Szczepaniak]{pszczepaniak@math.uni.opole.pl}
\email[Szymon \Zeberski]{szymon.zeberski@pwr.wroc.pl}
\address{Jan Kraszewski, Mathematical Institute, University of Wroc{\l}aw,
         pl. Grunwaldzki 2/4, 50-384 Wroc{\l}aw, Poland.}
\address{Robert \Ralowski and Szymon \Zeberski, Institute of
         Mathematics and Computer Sciences,
         Wroc{\l}aw University of Technology, Wybrze\.ze Wyspia\'n\-skie\-go 27,
         50-370 Wroc{\l}aw, Poland.}
\address{Przemys{\l}aw Szczepaniak , Institute of
         Mathematics and Computer Sciences,
	 University of Opole,
         ul. Oleska 48, 45-052 Opole, Poland.}
\subjclass[2000]{Primary 03E35, 03E75; Secondary 28A99}
\keywords{nonmeasurable set, real line, Bernstein set, $\kappa$-covering}

\maketitle

\begin{abstract}
In this paper we study a notion of a $\kappa$-covering in connection with Bernstein sets and other types of
nonmeasurability. Our results correspond to those obtained by Muthuvel in \cite{muthuvel} and Nowik in
\cite{nowik}. We consider also other types of coverings.
\end{abstract}

\section{Definitions and notation}

In 1993 Carlson in his paper \cite{Carlson} introduced a notion of $\kappa$-coverings and used it for investigating
whether some ideals are or are not $\kappa$-translatable. Later on $\kappa$-coverings were studied by other
authors, e.g. Muthuvel (cf. \cite{muthuvel}) and Nowik (cf. \cite{nowik}, \cite{nowik2}). In this paper we
present new results on $\kappa$-coverings in connection with Bernstein sets. We also introduce two natural
generalizations of the notion of $\kappa$-coverings, namely $\kappa$-S-coverings and $\kappa$-I-coverings.

We use standard set-theoretical notation and terminology from \cite{bartosz}. Recall that the cardinality of the set
of  all real numbers $\bbr$ is denoted by $\cont$. The cardinality of a set $A$ is denoted by $|A|$. If $\kappa$ is a
cardinal number then
$$
\begin{aligned}
  &[A]^\kappa=\{ B\sbe A:\; |B|=\kappa\};\\
  &[A]^{<\kappa}=\{ B\sbe A:\; |B|<\kappa\}.
\end{aligned}
$$
The cofinality of $\kappa$ is denoted by $\cf(\kappa)$. The power set of a set $A$ is denoted by $\PP(A)$.

For a given uncountable abelian Polish group $(X,+)$, the family of all uncountable perfect subsets of $X$ is denoted
by $\Perf(X),$ 
the family of all open subsets of $X$ is denoted
by $\Open(X)$
and the family of all Borel subsets of $X$ is denoted by $\Borel(X)$. We say that a set $B\sbe X$ is
\emph{a Bernstein set} if for every uncountable set $Z\in\Borel(X)$ both sets $Z\cap B$ and $Z\sm B$ are
nonempty.

In this paper $\II$ stands for a $\sid$ of subsets of a given uncountable abelian Polish group $(X,+)$. We will always
assume that $\II$ is proper and group invariant, contains singletons and has a Borel base (i.e. for every set
$A\in \II$ we can find a Borel set $B\in\II$ such that $A\sbe B$). We will use two cardinal characteristics of an
ideal $\II$: the additivity number $\add(\II)$ and the uniformity number $\non(\II)$, defined as follows:
$$
\begin{aligned}
  \add(\II)&=\min\set{|\AA|\,:\,A\sbe\II\,\land\,\Sum A\notin\II};\\
  \non(\II)&=\min\set{|A|\,:\,A\sbe X\,\land\,A\notin\II}.
\end{aligned}
$$
\indent
Let us recall the notion investigated for instance in \cite{CMRRZ}.

\begin{defn}
Let $N\sbe X$. We say that the set $N$ is \emph{completely $\II$-nonmeasurable} if
  $$(\fa A\in\Borel(X)\sm\II)(A\cap N \notin \II\,\land\,A\cap(X\sm N)\notin\II).$$
\end{defn}

In particular, for the $\sid$ of Lebesgue null sets $\NN\sbe\PP(\bbr)$ we have that a set $N\sbe\bbr$ is completely
$\NN$-nonmeasurable if and only if the inner measure of $N$ and the inner measure of the complement of $N$ are zero.
One can observe that if $\II$ is a $\sid$ of our interest (i.e. having properties mentioned above) then every Bernstein
set is completely $\II$-nonmeasurable. Hence the notion of a completely $\II$-nonmeasurable set generalizes the notion
of a Bernstein set.

\begin{defn}[Polish ideal space] We say that the pair $(X,\II)$ is Polish ideal space iff $X$ is uncountable Polish space and $\II\subseteq P(X)$ is a $\sigma$ ideal with singletons and Borel base.
\end{defn}

\begin{defn}[Polish ideal group] We say that the triple $(X,\II,+)$ is Polish ideal group iff $(X,\II)$ is a Polish ideal space space, $(X,+)$ is a Abelian group and $\II$ is invariant under group action $+$ which means that $(\forall A\in \II)(\forall t\in X)\; t+A\in \II$.
\end{defn}

While constructing completely $\II$-nonmeasurable sets having interesting covering properties we will concentrate
on $\sid$s including all unit spheres. Let us observe that classical $\sid$s such as the $\sid$ of null sets and
the $\sid$ of meager sets have this property.

The following notion of a tiny set is very useful in recursive constructions of completely $\II$-nonmeasurable sets.

\begin{defn}
Let $(X,\II,+)$ be Polish ideal group and let us fix a family $\AA\sbe\II$. We say that a perfect set $P\in\Perf(X)$ is \emph{a tiny set with respect
to $\AA$} if
\begin{enumerate}
 \item $(\fa t\in X)(\fa A\in\AA)\,|(P+t)\cap A|\le\omega,$
 \item $(\forall B\in \Borel(X)\setminus \II)(\exists t\in X)\;\; |(P+t)\cap B|=\cont.$
\end{enumerate}
\end{defn}
\noindent

In \cite{ralowski} \Ralowski proved the following useful lemma.

\begin{lem}\label{covering}
Let $\AA\sbe\II$. If there exists a perfect set $P\in\Perf(X)$, which is tiny with respect to $\AA$ then
  $$\min\set{|\BB|:\BB\sbe\AA\land (\ex B\in\Borel(X)\sm\II)( B\sbe\Sum\BB)}=\cont.$$
\end{lem}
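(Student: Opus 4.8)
The plan is to prove the equality by establishing the two inequalities separately, with the lower bound carrying all the weight of the hypothesis. Throughout I will use that every Borel set $B\notin\II$ is uncountable (since $\II$ is a $\sid$ containing singletons, hence all countable sets) and that $|X|=\cont$ (an uncountable Polish space has cardinality $\cont$).

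For the inequality ``$\ge\cont$'' I would argue by contradiction and show that no $\BB\sbe\AA$ with $|\BB|<\cont$ belongs to the family over which the minimum is taken. So suppose $B\in\Borel(X)\sm\II$ and $B\sbe\bigcup\BB$ with $|\BB|<\cont$. The idea is to use the tiny set $P$ as a probe. By clause (2) of tininess, choose $t\in X$ with $|(P+t)\cap B|=\cont$. On the other hand,
$(P+t)\cap B\sbe(P+t)\cap\bigcup\BB=\bigcup_{A\in\BB}\bigl((P+t)\cap A\bigr)$,
and by clause (1) each set $(P+t)\cap A$ is at most countable. Hence $|(P+t)\cap B|\le|\BB|\cdot\omega<\cont$, because $|\BB|<\cont$ forces $\max(|\BB|,\omega)<\cont$. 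This contradicts the choice of $t$, so every $\BB$ in the family has $|\BB|\ge\cont$.

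For the inequality ``$\le\cont$'' I would thin an arbitrary witness. Given any $\BB\sbe\AA$ with $B\sbe\bigcup\BB$ for some Borel $B\notin\II$, select for each point $x\in B$ a single member $A_x\in\BB$ with $x\in A_x$. The subfamily $\{A_x:x\in B\}$ still covers $B$ and has cardinality $\le|B|\le|X|=\cont$. Thus whenever the collection in the minimum is nonempty it contains a family of size $\le\cont$, so the minimum is $\le\cont$. (Nonemptiness holds in the intended applications, e.g. whenever $\bigcup\AA$ contains a Borel set outside $\II$, since then $\AA$ itself, thinned as above, is a witness.)

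The main obstacle is the lower bound, and within it the crucial insight is the use of a translate $P+t$ as a witness: clauses (1) and (2) of the definition of a tiny set are calibrated precisely so that a union of fewer than $\cont$ members of $\AA$ is forced to miss some point of every large Borel set. The upper bound, by contrast, is routine cardinality bookkeeping, the only delicate point being that the minimum is taken over a nonempty collection.
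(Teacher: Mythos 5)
Your proof is correct, but there is nothing in the paper to compare it against: the paper states Lemma~\ref{covering} without proof, importing it from Ra{\l}owski's submitted paper \cite{ralowski}. Your argument is the natural one and matches exactly how the lemma is used later (Lemma~\ref{covering2} invokes only your lower bound): pick $t$ by clause (2) of tininess so that $|(P+t)\cap B|=\cont$, then observe that clause (1) bounds the trace of the union by $|\BB|\cdot\omega<\cont$, a contradiction; the upper bound is the routine thinning $\{A_x:x\in B\}$ of size at most $|B|\le|X|=\cont$. Your caveat about nonemptiness of the family over which the minimum is taken is well placed and is a genuine (if minor) imprecision in the lemma as stated: clause (2) of tininess is a property of $P$ alone, and clause (1) can hold vacuously, so one can have a tiny $P$ with respect to an $\AA$ (say, countably many singletons) whose union covers no $\II$-positive Borel set, making the minimum undefined. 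Reading the equality as holding whenever a witness exists --- which is the case in every application in the paper --- is the right repair, and the applications in fact use only the inequality $\ge\cont$, where your argument carries full weight.
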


\begin{defn}[Steinhaus property] Let $(X,\II,+)$ be any Polish ideal group then $\sigma$ ideal $\II$ has Steinhaus property iff
$$
(\forall A,B\in \Borel(X)\sm\II)(\exists U\in\Open(X))\;\; \emptyset\neq U\subseteq A-B.
$$
\end{defn}
It is well known that ideals of all meager sets $\bbk$ and Lebesgue null sets $\bbl$ has a Steinhaus property.

Let observe that the following fact is true.
\begin{fact}\label{stein} Let $(X,\II,+)$ is a Polish ideal group and $\II$ has Steinhaus property. Let us consider any Borel $\II$ positive set $B$ (i.e. $B\in\Borel(X)\sm\II$) and let $Q\subseteq X$ be any dense countable subgroup of $X$. Then $(B+Q)^c\in\II$.
\end{fact}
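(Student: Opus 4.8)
The plan is to proceed by contradiction, after first observing that the complement in question is a Borel set to which the Steinhaus property can legitimately be applied. Since $Q$ is countable, $B+Q=\bigcup_{q\in Q}(B+q)$ is a countable union of translates of the Borel set $B$; as each translation is a homeomorphism of $X$, every $B+q$ is Borel, and hence $B+Q\in\Borel(X)$ and $C:=(B+Q)^c\in\Borel(X)$ as well. Suppose toward a contradiction that $C\notin\II$. Then $C$ is a Borel $\II$-positive set, and so is $B$ by hypothesis.

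Next I would invoke the Steinhaus property for the pair $(C,B)$. It yields a nonempty open set $U\in\Open(X)$ with $U\sbe C-B$. Because $Q$ is dense in $X$ and $U$ is nonempty and open, we may choose a point $q\in U\cap Q$.

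Finally, $q\in C-B$ means $q=c-b$ for some $c\in C$ and $b\in B$; rewriting (using commutativity of $+$) gives $c=b+q\in B+Q$. This contradicts $c\in C=(B+Q)^c$. Hence the assumption $C\notin\II$ is untenable, so $(B+Q)^c\in\II$, as required.

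I expect the only real subtlety, and thus the main obstacle, to be the bookkeeping needed to apply Steinhaus correctly: one must verify that $(B+Q)^c$ is Borel (so that it is an admissible input to the Steinhaus property), keep track of the direction of the difference set $C-B$, and use the density of $Q$ to land a group element inside the open witness $U$. It is worth remarking that the subgroup structure of $Q$ is not needed for this direction of the argument, only its density and countability; had one chosen the opposite difference $B-C$, one would instead be forced to use that $Q$ is closed under inverses to conclude $-q\in Q$.
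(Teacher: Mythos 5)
Your proof is correct and follows the paper's overall skeleton (contradiction, one application of the Steinhaus property, density of $Q$ to land a group element inside the open witness $U$), but you apply Steinhaus to a different pair, and that buys a genuine simplification. The paper sets $B^*=B+Q$, takes a Borel $\II$-positive set $A$ disjoint from $B^*$ (in effect $A=(B^*)^c$), applies Steinhaus to the pair $(A,B^*)$, and then needs the invariance $Q+B^*=B^*$ to conclude $q+b\in B^*\cap A$; that invariance is precisely where the subgroup structure of $Q$ (closure under addition, and implicitly $0\in Q$, which also guarantees $B\sbe B^*$ so that $B^*$ is $\II$-positive) enters the paper's argument. You instead apply Steinhaus to $(C,B)$ with $C=(B+Q)^c$ and $B$ the original positive set, so the contradiction $c=b+q\in B+Q$ falls out immediately, with no invariance step at all. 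Your closing remark is accurate: your variant uses only that $Q$ is countable (so $B+Q$ is Borel and $C$ is an admissible input to Steinhaus) and dense (to find $q\in U\cap Q$), so it in fact proves the statement for an arbitrary countable dense subset $Q$, slightly more general than stated. Both proofs are equally short; the paper's emphasizes the $Q$-invariance of $B^*$, while yours isolates the minimal hypotheses.
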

\begin{proof} Let $B^*=B+Q.$ It is a Borel set. Assume that $(B^*)^c\notin \II$ then there exists a Borel $\II$ positive set $A\in \Borel(X)\sm\II$ such that $A\cap B^*=\emptyset$. But by Steinhaus property there exists nonempty open set $U\subseteq X$ such that $U\subseteq A-B^*$. Then there exists  some $q\in Q$ and $b\in B^*$ such that $q+b\in A.$ Since $Q+B^*=B^*,\; q+b\in B^*\cap A,$ what gives a contradiction.
\end{proof}

Now we will concentrate on ideals $\bbl,\ \bbk.$ Next lemma is probably folclore, but for reader's convinience we give a proof of if (made by Cicho{\'n}).
\begin{lem}\label{cichon} Let $\II$ be one of them ideals $\bbl,\bbk$ then
$$
(\forall B\in \Borel(\bbr))(\forall P\in \Perf(\bbr))(\exists t\in\bbr)\;\; |(t+P)\cap B|=\cont.
$$
\end{lem}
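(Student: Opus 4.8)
The plan is to handle both ideals $\II=\bbl$ and $\II=\bbk$ by one scheme, trading Lebesgue measure for Lebesgue category via the Kuratowski--Ulam theorem. First I note that the statement must be read for $\II$-positive $B$, i.e. $B\in\Borel(\bbr)\sm\II$ (otherwise, e.g. a singleton $B$, the conclusion $|(t+P)\cap B|=\cont$ is impossible). Fix such a $B$ and a perfect $P\in\Perf(\bbr)$. The whole argument runs through the incidence set
$$J=\{(t,x)\in\bbr\times P:\ t+x\in B\},$$
which is Borel, being the preimage of $B$ under the continuous map $(t,x)\mapsto t+x$ restricted to $\bbr\times P$. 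Its two families of sections are, for fixed $x\in P$, $J_x=\{t:t+x\in B\}=B-x$, and for fixed $t$, $J^t=\{x\in P:t+x\in B\}=P\cap(B-t)$. Since $t+J^t=(t+P)\cap B$, it suffices to produce one $t$ for which $J^t$ is uncountable: as a Borel subset of $\bbr$ it then has size $\cont$ by the perfect set property, and translation is a bijection.

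Measure case $\II=\bbl$. After intersecting $B$ with a large ball I may assume $0<\lambda(B)<\infty$, where $\lambda$ is Lebesgue measure. Since $P$ is an uncountable perfect set it carries a continuous (atomless) Borel probability measure $\mu$, e.g. the push-forward of the product measure on $\cant$ under a homeomorphic embedding $\cant\hookrightarrow P$; atomlessness guarantees that every Borel set of positive $\mu$-measure is uncountable. Apply Fubini to $J$ with respect to $\lambda\times\mu$ on $\bbr\times P$. Integrating first in $t$, each section has $\lambda(J_x)=\lambda(B-x)=\lambda(B)$, so
$$(\lambda\times\mu)(J)=\int_P\lambda(B)\,d\mu(x)=\lambda(B)>0.$$
Integrating first in $x$ then forces $\mu(J^t)>0$ on a positive-measure set of $t$; any such $t$ makes $J^t$ uncountable, whence $|(t+P)\cap B|=|J^t|=\cont$.

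Category case $\II=\bbk$. Here $B$ is non-meager with the Baire property, and I run the same argument with Kuratowski--Ulam in place of Fubini, using that $P$ is a perfect Polish space (closed in $\bbr$, no isolated points) so category in $\bbr\times P$ behaves well. Every section $J_x=B-x$ is non-meager, since $B\notin\bbk$ and $\bbk$ is translation invariant. As $J$ has the Baire property and no $x$-section is meager, Kuratowski--Ulam yields that $J$ is non-meager in $\bbr\times P$; slicing in the other coordinate, the same theorem supplies a $t$ for which $J^t=P\cap(B-t)$ is non-meager in $P$. Because $P$ has no isolated points, countable subsets of $P$ are meager, so $J^t$ is uncountable, and again $|(t+P)\cap B|=|J^t|=\cont$.

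The measurability of $J$ and the section bookkeeping are routine; the one genuine point to get right is that in the measure case Lebesgue measure cannot be used on the $P$-side, because $P$ may itself be Lebesgue-null. Replacing it by an atomless measure $\mu$ concentrated on $P$ is exactly what turns ``$J^t$ is $\mu$-large'' into ``$J^t$ is uncountable,'' and this, combined with the perfect set property upgrading an uncountable Borel set to size $\cont$, is the crux common to both cases.
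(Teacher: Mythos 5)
Your proof is correct, and it takes a genuinely different route from the paper's. You argue directly in ZFC through the incidence set $J=\{(t,x):x\in P,\ t+x\in B\}$: for $\II=\bbl$ you place an atomless Borel probability measure $\mu$ on $P$ (pushed forward from $\cant$) and compute via Fubini that $\int\mu(J^t)\,d\lambda(t)=\lambda(B)>0$, producing a $t$ with $\mu(J^t)>0$; for $\II=\bbk$ you substitute Kuratowski--Ulam, using that every section $J_x=B-x$ is non-meager by translation invariance and that countable subsets of the perfect Polish space $P$ are meager in $P$. In both cases the perfect set property upgrades the uncountable Borel set $J^t=P\cap(B-t)$ to size $\cont$, and your section bookkeeping $t+J^t=(t+P)\cap B$ is right. (You were also correct to read the hypothesis as $B\in\Borel(\bbr)\sm\II$; the statement omits this, but the $\Sigma^1_2$ formula inside the paper's own proof includes it.) The paper proceeds quite differently: it first proves the lemma under the extra hypothesis $\omega_1<\mathrm{cov}(\II)$, using the Steinhaus property (Fact \ref{stein}) to get $(B^*)^c\in\II$ for $B^*=B+\bbq$, choosing $T\in[P]^{\omega_1}$, finding a single $y$ with $T\sbe -y+B^*$ since $\omega_1$ many sets from $\II$ cannot cover $\bbr$, and pigeonholing over the countably many rationals to place $\omega_1$ many points of $P$ in one translate of $B$; it then removes the cardinal hypothesis by passing to a forcing extension satisfying $MA+\neg CH$ and pulling the $\Sigma^1_2$ statement back to the ground model by Shoenfield absoluteness. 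Your route is more elementary and self-contained --- no forcing, no absoluteness, just the two classical product theorems --- and is close in spirit to the convolution proof of Ryll-Nardzewski mentioned in the paper's remark. What the paper's metamathematical route buys is portability: as the remark following the proof notes, it works verbatim for any invariant $\sigma$-ideal with Borel base and the Steinhaus property for which $\omega_1<\mathrm{cov}(\II)$ is consistent and Borel codes are absolute, a setting in which no Fubini- or Kuratowski--Ulam-type theorem need be available, whereas your argument is tied to the specific structure of measure and category.
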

\begin{proof}(Cicho{\'n}) Firstly, let us assume that $\omega_1<cov(\II)$. Now choose any subset $T\in [P]^{\omega_1}$ of the perfect set $P$ with cardinality $\omega_1$. Let $B^*=B+\bbq.$ Then using assumption $\omega_1<cov(\II)$  and Fact \ref{stein} we have $\bigcup_{t\in T} (t+B^*)^c\ne \bbr$. Then $\bigcup_{t\in T}t+B^*$ is nonempty set. Let $y\in\bigcap_{t\in T} (t+B^*)$ be any its element. Then by simple calculation we have
\begin{align*}
y\in\bigcap_{t\in T}(-t)+B^*\iff &(\forall t\in T) y\in -t+B^*\iff (\forall t\in T)y+t\in B^* \\
\iff &(\forall t\in T)t\in -y+B^*\iff T\subseteq -y+B^*.
 \end{align*}
But $\bbq$ is countable, so there are $x\in\bbr$ and $S\in [T]^{\omega_1}$ such that $S\subseteq x+B.$ But $S\subseteq P$ and $P$ is perfect set, so $|(x+B)\cap P|=\cont$.

Now let $V$ be any model of $ZFC$ theory. There is a generic extension $V[G]$ fulfilling condition $MA+\neg \cont=\omega_1.$ So, in $V[G]\;  \omega_1<cov(\II)$. But the following formula
$$
(\forall P\in\Perf)(\forall B\in \Borel(\bbr)\sm\II)(\exists x\in\bbr)\; |(x+P)\cap B|=\cont.
$$
is $\Sigma_2^1.$ So by Shoenfield absolutness theorem (cf. \cite{shoenfield}) it  holds also in ground model $V$.
\end{proof}

\begin{rem} Another proof for measure case was given by Ryll-Nardzewski. His proof is based on convolution measures. The other proof was due to Morayne, where density point of measure was used.
\end{rem}

\begin{rem} Let us observe that Lemma \ref{cichon} is true whenever we replace ideal of meager or null sets by any $\sidi$ ideal $\II$ with Borel base and with the Steinhaus property for which it is consistent that $\omega_1<cov(\II)$ and the Borel codes for a sets from ideal $\II$ are absolute between transitive models of ZFC theory.
\end{rem}

Lemma \ref{cichon} gives simpler characterization of tiny set in case $\II=\bbl,\bbk$ namely we have the following corollary.
\begin{cor} If $\II\in\{ \bbl,\bbk\}$ then $A$ is tiny set with respect
to $\AA$ iff $$(\fa t\in X)(\fa A\in\AA)\,|(P+t)\cap A|\le\omega.$$
\end{cor}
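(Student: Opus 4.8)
The plan is to read off the corollary directly from Lemma~\ref{cichon}, the point being that for these two classical ideals condition~(2) in the definition of a tiny set is automatically satisfied by \emph{every} perfect set. The forward implication needs no work: if $P$ is tiny with respect to $\AA$ then, by definition, it satisfies condition~(1).

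For the reverse implication I would start from the observation that condition~(2), namely
$$(\fa B\in\Borel(X)\sm\II)(\ex t\in X)\;|(P+t)\cap B|=\cont,$$
is precisely the conclusion of Lemma~\ref{cichon} (with $B$ understood to range over $\II$-positive Borel sets, as in the displayed formula at the end of its proof). Since Lemma~\ref{cichon} quantifies universally over all perfect sets $P\in\Perf(\bbr)$ and all $B\in\Borel(\bbr)\sm\II$, it supplies condition~(2) for our particular $P$ with no extra hypothesis. Consequently, once we assume condition~(1), both defining clauses of a tiny set hold, and $P$ is tiny with respect to $\AA$.

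There is essentially no obstacle here: all the genuine content already lives in Lemma~\ref{cichon}, whose proof (via $MA$ and Shoenfield absoluteness) establishes the translation property for arbitrary perfect sets. The corollary merely records that, for $\II\in\{\bbl,\bbk\}$, this makes condition~(2) redundant, so that tininess is governed by condition~(1) alone. The only care to take is to match the quantifier ranges of the lemma with clause~(2) of the definition, which is immediate.
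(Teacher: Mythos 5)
Your proposal is correct and is exactly the paper's intended argument: the corollary is stated as an immediate consequence of Lemma~\ref{cichon}, which shows that clause~(2) of the definition of a tiny set holds for every perfect set when $\II\in\{\bbl,\bbk\}$, so tininess reduces to clause~(1). You also rightly note the quantifier range in Lemma~\ref{cichon} should be read as $B\in\Borel(\bbr)\sm\II$ (as in the $\Sigma^1_2$ formula in its proof), which is the only point of care.
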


Let us notice that previous result is not true in general (as pointed by referee). Namely we have the following example.
\begin{exam}[given by referee] Assume that the cofinality of the meager ideal on $\bbr$ is $\omega_1$ and $\omega_1<\cont$. Let $( A_\alpha:\alpha<\omega_1)$ be a cofinal tower of meager sets in $\bbr$, $X=\bbr\times\bbr$. Let $\II$ be the $\sigma$-ideal of subsets of $X$ with meager projection on the first coordinate. Let $\AA=\{ A_\alpha\times\{ 0\}:\alpha<\omega_1\}$, $P=\{0\}\times\bbr$ and $B=\bbr\times\{0\}$. Then $B\in \Borel(X)\sm\II$, $|P\cap A_\alpha\times\{0\}|\le 1$ and $B\subseteq \bigcup \AA$.
\end{exam}

In our applications we will concentrate on families of unit spheres in $\bbr^n$.

\begin{lem}\label{covering2}
Let $\II\in\{ \bbk,\bbl\}$. Let $\DD$ be a family of unit spheres of size less than continuum and let $B\in\Borel(\bbr^n)\sm\II$. Then
  $$\big|B\sm\Sum\DD\big|=\cont.$$
\end{lem}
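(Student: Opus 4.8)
The plan is to exploit the elementary geometric fact that a straight line meets any unit sphere in at most two points, and to combine it with Lemma \ref{cichon} in order to locate a line that meets the $\II$-positive set $B$ in continuum many points. Once such a line is found, the union $\Sum\DD$ can delete at most $2\cdot|\DD|<\cont$ of its points, so continuum many survive inside $B\sm\Sum\DD$.

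Concretely, I would first fix a perfect subset of a line, for instance the segment
$$P=\set{(s,0,\dots,0):s\in[0,1]}\in\Perf(\bbr^n),$$
which lies on the coordinate axis $\ell_0=\bbr\times\set{0}^{n-1}$. Applying Lemma \ref{cichon} in its $\bbr^n$ version (valid for $\II\in\set{\bbk,\bbl}$ by the Steinhaus property together with the Shoenfield absoluteness argument, as noted in the Remark following that lemma) to the $\II$-positive Borel set $B$ and to $P$, I obtain a translation $t\in\bbr^n$ with
$$\big|(t+P)\cap B\big|=\cont.$$
The set $t+P$ is contained in the line $\ell=t+\ell_0$. Next I would bound $|\ell\cap\Sum\DD|$: writing each $S\in\DD$ as $\set{x:\|x-c_S\|=1}$ and substituting a parametrisation $s\mapsto t+sv$ of $\ell$ into $\|x-c_S\|^2=1$ yields a quadratic in $s$ with nonzero leading coefficient, whence $|\ell\cap S|\le2$. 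Therefore
$$|\ell\cap\Sum\DD|\le\sum_{S\in\DD}|\ell\cap S|\le 2\cdot|\DD|=|\DD|<\cont,$$
the middle equality holding because $\DD$ is infinite (the finite case being trivial).

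Finally I would combine the two estimates. Since $(t+P)\cap B$ has size $\cont$ while $(t+P)\cap\Sum\DD\sbe\ell\cap\Sum\DD$ has size $<\cont$, deleting the latter from the former cannot lower the cardinality, so $\big|(t+P)\cap B\sm\Sum\DD\big|=\cont$. As $(t+P)\cap B\sm\Sum\DD\sbe B\sm\Sum\DD$, this gives $\big|B\sm\Sum\DD\big|=\cont$, as required. I expect the only genuinely delicate point to be the legitimacy of invoking Lemma \ref{cichon} on $\bbr^n$ rather than on the line $\bbr$; the quadratic intersection bound and the cardinal arithmetic are routine.
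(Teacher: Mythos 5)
Your proof is correct, but it takes a genuinely different (and more self-contained) route than the paper. The paper's proof is a two-line appeal to Lemma \ref{covering}: it observes that every line is a tiny set with respect to the family of all unit spheres --- condition (1) of tininess being exactly your quadratic bound $|\ell\cap S|\le 2$, and condition (2) being supplied for $\II\in\set{\bbk,\bbl}$ by Lemma \ref{cichon} (see the Corollary following it) --- and concludes from Lemma \ref{covering} that $B$ cannot be covered by $\Sum\DD$. What you have done is, in effect, inline the proof of Lemma \ref{covering} in this special case: you invoke Lemma \ref{cichon} directly to produce a translated segment $t+P$ of a line meeting $B$ in continuum many points, then discard the at most $2\cdot|\DD|<\cont$ points of that line lying on $\Sum\DD$. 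Your version buys two things: it avoids quoting the black-box lemma from \cite{ralowski}, and it delivers the cardinality conclusion $|B\sm\Sum\DD|=\cont$ directly, whereas Lemma \ref{covering} as stated only yields $B\not\sbe\Sum\DD$, so the paper's final ``Hence'' tacitly needs a further small step (e.g., if $|B\sm\Sum\DD|<\cont$, cover each leftover point by a unit sphere through it and contradict non-coverability for the enlarged family of size still below $\cont$). The paper's route, in exchange, is shorter and works uniformly for any family $\AA$ admitting a tiny perfect set. You were also right to flag the use of Lemma \ref{cichon} on $\bbr^n$ rather than $\bbr$ as the one delicate point, and your justification is the correct one: the Steinhaus property holds for $\bbk$ and $\bbl$ on $\bbr^n$, $\omega_1<cov(\II)$ holds in a model of MA with $\cont>\omega_1$, the statement is $\Sigma^1_2$, and Shoenfield absoluteness transfers it to the ground model --- exactly the generalization anticipated in the Remark after Lemma \ref{cichon}. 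Note that the paper's own proof tacitly relies on the same $\bbr^n$ version, since the tininess of lines in $\bbr^n$ (condition (2) of the definition) rests on it as well.
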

\begin{proof}
Observe that every line is a tiny set with respect to the family of all unit spheres. So according to
Lemma~\ref{covering} the set $B$ cannot be covered by $\Sum\DD$. Hence $|B\sm\Sum\DD|=\cont$.
\end{proof}

\section{Coverings on the real line}

In \cite{Carlson} Carlson introduced the following definition.

\begin{defn}\label{covdef}
We say that the set $A\sbe\bbr$ is \emph{a $\kappa$-covering} if for every set $B\sbe\bbr$ of cardinality $\kappa$
there exists a real number $x\in\bbr$ such that $B+x\sbe A.$
\end{defn}

Analogously, a set $A\sbe\bbr$ is \emph{a $\,<\!\kappa$-covering} if every set $B\sbe\bbr$ of cardinality less then
$\kappa$ can be translated into it (cf. \cite{muthuvel}). Of course, these definitions are reasonable also for other
uncountable abelian Polish groups.

Nowik in his papers studied partitions of the Cantor space $2^\omega$ into regular (Borel) $\omega$-coverings. He
constructed such a partition of size continuum (\cite{nowik}) and a partition into two sets, one $F_\sigma$, one
$G_\delta$, having some special property. We present analogous and even stronger results concerning irregular
(Bernstein) sets.

First we prove that we can find a partition of the real line into two Bernstein sets having no covering properties.

\begin{thm}
 There exists a partition of the real line $\bbr$ into two sets $A,\ B$ such that each of them is
 a Bernstein set and none of them is a $2$-covering.
\end{thm}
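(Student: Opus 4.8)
The plan is to translate the covering condition into a forbidden-difference condition and then to build the partition so that this difference is never realized inside either piece. First I would observe that a set $A\sbe\bbr$ is a $2$-covering exactly when for every $d\neq 0$ there is some $a\in A$ with $a+d\in A$; indeed, translating a pair $\set{p,q}$ into $A$ amounts to finding two points of $A$ whose difference equals $q-p$, and $q-p$ ranges over all nonzero reals as $\set{p,q}$ ranges over all $2$-element sets. Consequently, to guarantee that $A$ is \emph{not} a $2$-covering it suffices to exhibit a single $d\neq 0$ with $A\cap(A+d)=\es$, and likewise for $B$. I will use $d=1$ for both pieces.

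Next I would exploit the decomposition of $\bbr$ into cosets of $\bbz$. Writing each real uniquely as $c+n$ with $c\in[0,1)$ and $n\in\bbz$, the pairs of points differing by $1$ are exactly the pairs of consecutive elements within a single coset $c+\bbz$. Hence a partition $\bbr=A\cup B$ avoids the difference $1$ in \emph{both} pieces precisely when, inside every coset, the two colours alternate; equivalently, the whole partition is determined by a \emph{phase} function $\epsilon\colon[0,1)\to\set{0,1}$, with $A=\set{c+n:\,n\equiv\epsilon(c)\ (\mathrm{mod}\ 2)}$ and $B$ its complement. Every such $A$ automatically satisfies $A\cap(A+1)=\es=B\cap(B+1)$, so by the first paragraph neither $A$ nor $B$ is a $2$-covering, regardless of the choice of $\epsilon$.

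It remains to choose the phases so that both $A$ and $B$ are Bernstein, and this is where the only real work lies. I would enumerate $\Perf(\bbr)=\set{P_\alpha:\alpha<\cont}$ and decide the values $\epsilon(c)$ by transfinite recursion, at stage $\alpha$ forcing $P_\alpha$ to meet both $A$ and $B$. The key combinatorial fact is that a perfect set meets $\cont$ many distinct cosets: each coset is countable, so the coset-classes of $P_\alpha$ are countable, and since $|P_\alpha|=\cont$ there must be $\cont$ of them. As fewer than $\cont$ phases have been committed before stage $\alpha$, I can pick two points of $P_\alpha$ lying in two distinct, still-undecided cosets $c_p\neq c_q$ and set $\epsilon(c_p)$, $\epsilon(c_q)$ so that one point falls into $A$ and the other into $B$; cosets never selected are assigned arbitrarily at the end. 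Because each coset is touched at most once, the recursion is consistent, and the resulting $A$ meets every perfect set while its complement $B$ does too, so both are Bernstein.

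The main obstacle to anticipate is the tension between the Bernstein requirement, which demands enough freedom to place points of both colours inside every perfect set, and the difference constraint, which is rigid: once we insist on avoiding the difference $1$ in both pieces, the colouring inside each coset is completely forced up to a single phase bit. The construction succeeds precisely because this one bit per coset, together with the fact that every perfect set spreads across $\cont$ cosets, supplies exactly the freedom the Bernstein recursion needs; verifying the cardinality bookkeeping — that the $<\cont$ committed cosets never exhaust the $\cont$ cosets met by $P_\alpha$ — is the step I would check most carefully.
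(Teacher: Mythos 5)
Your proof is correct and is essentially the paper's argument in cleaner packaging: the paper's conditions (5)--(6), together with its remark that $(A_\beta\cup B_\beta)+\bbz=A_\beta\cup B_\beta$, amount exactly to your observation that avoiding the difference $1$ in both pieces forces alternation on each coset of $\bbz$, leaving one phase bit per coset, which your transfinite recursion over $\Perf(\bbr)$ then sets so as to meet every perfect set in both colours. The only cosmetic difference is that by defining $B$ as the complement of $A$ from the outset you obtain the partition for free, whereas the paper additionally enumerates the reals to guarantee $A\cup B=\bbr$.
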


\begin{proof}
  Let $\Perf(\bbr)=\set{P_\alpha:\alpha<\cont}$ and $\bbr=\set{r_\alpha:\alpha<\cont}$ be fixed enumerations
  of all perfect subsets of the real line and of the reals, respectively.
  By transfinite induction we build two increasing sequences $(A_\alpha)_{\alpha<\cont},\ (B_\alpha)_{\alpha<\cont}$
  of subsets of $\bbr$ such that for every $\alpha<\cont$ the following conditions are satisfied:
  \begin{enumerate}
   \item $|A_\alpha |=|B_\alpha |=|\alpha |;$
   \item $r_\alpha\in A_\alpha\cup B_\alpha;$
   \item $A_\alpha\cap P_\alpha\neq\emptyset,\ B_\alpha\cap P_\alpha\neq\emptyset;$
   \item $A_\alpha\cap B_\alpha=\emptyset.$
  \end{enumerate}
  Moreover, to ensure that $A_\alpha$ and $B_\alpha$ are not $2$-coverings we want them to satisfy two more
  conditions:
 \begin{enumerate}
  \item[(5)] $(\forall x\in A_\alpha)(\{x-1,\ x+1\}\subseteq B_\alpha)$;
  \item[(6)] $(\forall x\in B_\alpha)(\{x-1,\ x+1\}\subseteq A_\alpha)$.
 \end{enumerate}
  Now, the set $\set{0,1}$ cannot be translated neither into $A_\alpha$ nor into $B_\alpha$.

  We are able to fulfill all these conditions because being at the $\alpha$th step of our construction we
  know that $|\Sum_{\beta<\alpha}(A_\beta\cup B_\beta)|<\cont$ and for every $\beta<\alpha$ we have
  $(A_\beta\cup B_\beta)+\bbz=A_\beta\cup B_\beta$.

 Finally, we put $A=\Sum_{\alpha<\cont}A_\alpha$ and $B=\bigcup_{\alpha<\cont}B_\alpha$. These sets are Bernstein
 sets because of (3), form a partition of $\bbr$ because of (2) and (4) and are not $2$-coverings as neither are
 sets $A_\alpha$ and $B_\alpha$.
\end{proof}

The next theorem is in contrast with the previous one.

\begin{thm}\label{cofi}
 There is a partition $\set{B_\xi:\xi<\cont}$ of the real line into Bernstein sets such
 that for every $\xi<\cont$ the set $B_\xi$ is a $\,<\!cf(\cont)$-covering.
\end{thm}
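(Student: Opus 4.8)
The plan is to build the partition by a single transfinite recursion of length $\cont$, coding simultaneously two families of requirements: a \emph{Bernstein} family forcing each class to meet (and to miss) every perfect set, and a \emph{covering} family forcing each class to contain a translate of enough small sets. Fix enumerations $\Perf(\bbr)=\set{P_\alpha:\alpha<\cont}$ and $\bbr=\set{r_\beta:\beta<\cont}$, and for $\alpha<\cont$ put $R_\alpha=\set{r_\beta:\beta<\alpha}$, the $\alpha$-th initial segment of the reals. I will construct increasing sequences $(C_\xi^\nu)_{\nu<\cont}$ for each $\xi<\cont$, pairwise disjoint at every stage, and set $B_\xi=\Sum_{\nu<\cont}C_\xi^\nu$.

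The crucial observation --- and the place where the hypothesis on $\cf(\cont)$ enters --- is that it suffices to guarantee, for every $\alpha<\cont$ and every $\xi<\cont$, that $B_\xi$ contains a translate of the single set $R_\alpha$. Indeed, if $D\sbe\bbr$ has $|D|<\cf(\cont)$, then the index set $\set{\beta:r_\beta\in D}$ has cardinality $<\cf(\cont)$, hence is bounded below $\cont$, so $D\sbe R_\alpha$ for some $\alpha<\cont$; a translate $x+R_\alpha\sbe B_\xi$ then yields $x+D\sbe B_\xi$. Thus the (a priori possibly more than $\cont$ many, since $\cont^{\kappa}$ may exceed $\cont$ for $\kappa<\cf(\cont)$) demands ``$B_\xi$ is a $<\!\cf(\cont)$-covering'' collapse to the $\cont$ many requirements ``$B_\xi\supseteq$ a translate of $R_\alpha$''. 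This reduction is the heart of the argument; everything else is the standard free-room bookkeeping.

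I list all requirements --- the Bernstein ones $\seq{P_\alpha,\xi}$ (``$C_\xi$ meets $P_\alpha$'') and the covering ones $\seq{R_\alpha,\xi}$ --- along the canonical well-ordering of $\cont\times\cont$, so that each proper initial segment involves only boundedly many indices and, at every stage $\nu<\cont$, the set of already-committed reals has cardinality $<\cont$. At a Bernstein stage I pick a point of $P_\alpha$ not yet committed (possible as $|P_\alpha|=\cont$) and put it into $C_\xi$. At a covering stage I must find $x$ with $x+R_\alpha$ disjoint from everything committed to classes other than $\xi$; the set of bad $x$ is contained in $(\text{committed elsewhere})-R_\alpha$, of size at most $|{\rm committed}|\cdot|R_\alpha|<\cont$, so a good $x$ exists, and I add the uncommitted points of the translate $x+R_\alpha$ to $C_\xi$. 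Disjointness is preserved because only previously uncommitted reals are ever assigned.

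Finally I distribute any reals still uncommitted arbitrarily among the classes. Each $B_\xi$ meets every $P_\alpha$ by the Bernstein requirements, and misses every $P_\alpha$ because some other class $C_\eta$ also meets $P_\alpha$; hence every $B_\xi$ is a Bernstein set. Each $B_\xi$ contains a translate of every $R_\alpha$, hence of every set of size $<\cf(\cont)$, so it is a $<\!\cf(\cont)$-covering. The main obstacle is exactly the cardinality count flagged above: naively one would try to translate every small set directly, but there can be more than $\cont$ of them; replacing ``every small set'' by ``every initial segment $R_\alpha$'' is what makes a length-$\cont$ recursion feasible, and it works precisely because $<\!\cf(\cont)$-sized subsets of $\bbr$ are bounded in the enumeration.
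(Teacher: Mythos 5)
Your proof is correct, and its central reduction is the same one the paper's proof rests on: replace the (possibly more than $\cont$ many) demands ``translate every set of size $<\!\cf(\cont)$ into $B_\xi$'' by $\cont$ many demands of the form ``$B_\xi$ contains a translate of $R_\alpha$,'' using the fact that any set of size $<\!\cf(\cont)$ is contained in one of the fixed $R_\alpha$'s. What differs is the organization of the recursion. The paper fixes a cofinal sequence $(c_\alpha)_{\alpha<\kappa}$ in $\cont$, where $\kappa=\cf(\cont)$, takes only $\kappa$ many sets $R_\alpha$ with $|R_\alpha|=|c_\alpha|$ increasing to $\bbr$ (so the covering of small sets follows from regularity of $\kappa$), and runs a recursion of length $\kappa$, at stage $\alpha$ building the whole slab $\set{B^\alpha_\xi:\xi<c_\alpha}$ at once; with this slicing the set of points committed so far trivially has size $|c_\alpha|<\cont$. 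You instead take the $\cont$ many initial segments $R_\alpha=\set{r_\beta:\beta<\alpha}$ (covering small sets by boundedness of their index sets below $\cont$) and run a single recursion of length $\cont$ over individual requirement pairs. The dangerous point in your version --- which the paper's slab scheme sidesteps --- is that for singular $\cont$ a union of fewer than $\cont$ sets each of size $<\cont$ can have size $\cont$; your appeal to the canonical well-ordering of $\cont\times\cont$, under which every initial segment has all indices bounded by some $\gamma<\cont$ and hence commits at most $|\gamma|\cdot|\gamma|=|\gamma|<\cont$ reals, is exactly what rescues the count, and you flag it correctly. In short: same key lemma, two different bookkeeping schemes --- the paper's buys transparent cardinality estimates at the price of a two-level indexing, yours is the more standard one-requirement-at-a-time Bernstein construction but stands or falls on that boundedness observation, which you supplied.
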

\begin{proof}
  Let $\kappa=cf(\cont)$ and let $(c_\alpha)_{\alpha<\kappa}$ be a cofinal increasing sequence of elements of $\cont$.
   Let us fix an increasing sequence $(R_\alpha)_{\alpha<\kappa}$ of subsets of $\bbr$ and a sequence
   $(\PP_\alpha)_{\alpha<\kappa}$ of families of perfect subsets of $\bbr$ such that
  $$\bbr=\bigcup_{\alpha<\kappa}R_\alpha,\quad\Perf(\bbr)=\bigcup_{\alpha <\kappa}\PP_\alpha$$
  and $|R_\alpha|=|\PP_\alpha|=|c_\alpha|$.

  By transfinite induction we build a sequence of families $(\set{B^\alpha_\xi:\xi<c_\alpha})_{\alpha<\kappa}$
  satisfying the following conditions:
  \begin{enumerate}
    \item for every $\alpha<\kappa$ and for every $\xi< c_\alpha$ we have $|B_\xi^\alpha|=|c_\alpha|$;
    \item for every $\alpha<\kappa$ sets from the family $\set{B^\alpha_\xi:\xi<c_\alpha}$ are pairwise disjoint;
    \item for every $\xi<\cont$ and every $\alpha_1<\alpha_2<\kappa$ such that $\xi<c_{\alpha_1}$ we have
          $B^{\alpha_1}_\xi\sbe B^{\alpha_2}_\xi$;
    \item for every $\alpha<\kappa$ the intersection $B_\xi^\alpha\cap P$ is nonempty for
          every $\xi<c_\alpha$ and every perfect set $P$ from the family $\PP_\alpha$;
    \item for every $\alpha<\kappa$ and every $\xi<c_\alpha$ there exists $x\in\bbr$ such that
          $x+R_\alpha\sbe B^\alpha_\xi$.
  \end{enumerate}
 We obtain such a sequence as follows. Assume that we are at the $\alpha$th step of the construction, so we have
 already built families $\set{B^\beta_\xi:\xi<c_\beta}$ for $\beta<\alpha$. One can observe that the cardinality
 of the union of all sets $B^\beta_\xi$ constructed so far (let us notice this sum by $S$) is small:
  $$|S|=\left|\Sum_{\beta<\alpha}\Sum_{\xi<c_\beta}B^\beta_\xi\right|\le|c_\alpha|\cdot|c_\alpha|\cdot|\alpha|=
    |c_\alpha|<\cont.$$
 For every $\xi<c_\alpha$ let us put
  $$B^{<\alpha}_\xi=\Sum_{\beta<\alpha}B^\beta_\xi$$
 (the set $B^{<\alpha}_\xi$ is empty for $\Sum_{\beta<\alpha}c_\beta\le\xi<c_\alpha$). Let us notice that
 there are at most $c_\alpha$ many real numbers $x$ such that $(x+R_\alpha)\cap S \not=\es$. Hence we can recursively
 enlarge every set $B^{<\alpha}_\xi$ adding to it a set $x_\xi+R_\alpha$ for some $x_\xi\in\bbr$ and keeping
 all enlarged sets pairwise disjoint -- it is enough to fulfill (5). To fulfill (4)  we have to enlarge our sets once
 more adding recursively to each of them one point from every set $P\in\PP_\alpha$.
 Again, we can do this without losing disjointness. As a result we obtain a family $\set{B^\alpha_\xi:\xi<c_\alpha}$
 which fulfills conditions (2)--(5). But the condition (1) is also fulfilled because constructing every set
 $B^\alpha_\xi$ we have added $|c_\alpha|$ many new points.

  Finally, we put
   $$B_\xi=\Sum_{\alpha<\kappa}B^\alpha_\xi$$
  (assuming that $B^\alpha_\xi=\es$ for $\alpha<\min\set{\eta:\xi<c_\eta}$).

  Thanks to (2) the family $\set{B_\xi:\xi<\cont}$ consists of pairwise disjoint sets and without
  problems we can extend them to get a partition of $\bbr$. By (4) every set $B_\xi$ is a Bernstein set.
  Moreover, the condition (5) is enough to ensure that every set $B_\xi$ is a $<\!\kappa$-covering. It is because
  every subset of the real line of cardinality smaller than $\kappa$ is a subset of one of the $R_\alpha$'s.
\end{proof}







On the other hand, as the only $\cont$-covering subset of the real line is the set $\bbr$ itself, we have the
following fact.

\begin{fact}
 Assume $CH$. Then there is no Bernstein set which is an $\omega_1$-covering.
\end{fact}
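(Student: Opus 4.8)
The plan is to reduce the statement to the observation recorded just above it, namely that $\bbr$ is the only $\cont$-covering subset of the real line; once that is in hand the conclusion is immediate. First I would make that observation precise. Suppose $A\sbe\bbr$ is a $\cont$-covering. Applying Definition~\ref{covdef} with $\kappa=\cont$ to the particular set $B=\bbr$, which has cardinality exactly $\cont$, we obtain some $x\in\bbr$ with $\bbr+x\sbe A$. Since $\bbr+x=\bbr$, this forces $A=\bbr$. Thus no proper subset of $\bbr$ can be a $\cont$-covering.

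Next I would bring in the hypothesis. Assuming $CH$ we have $\omega_1=\cont$, so the notion of an $\omega_1$-covering coincides with that of a $\cont$-covering. Combining this with the previous step, any $\omega_1$-covering must equal $\bbr$. Finally I would observe that $\bbr$ fails to be a Bernstein set: for any uncountable $Z\in\Borel(\bbr)$ we have $Z\sm\bbr=\es$, which violates the defining requirement that both $Z\cap B$ and $Z\sm B$ be nonempty. Therefore no Bernstein set can be an $\omega_1$-covering, as desired.

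The whole argument is essentially a one-line deduction, so there is no real obstacle to overcome. The single point that requires any care — and the closest thing to a subtlety — is recognizing that $\bbr$ itself is a legitimate choice of the translated set $B$ in the definition of a $\cont$-covering; it is precisely this choice that collapses every $\cont$-covering to the whole line, and hence every $\omega_1$-covering under $CH$. Everything else is a direct appeal to the definitions of a $\cont$-covering and of a Bernstein set.
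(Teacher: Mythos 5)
Your proof is correct and follows exactly the route the paper intends: the remark preceding the fact (that $\bbr$ is the only $\cont$-covering, witnessed by taking $B=\bbr$ in the definition) plus $\omega_1=\cont$ under $CH$, plus the observation that $\bbr$ itself is not a Bernstein set. The paper leaves this one-line deduction implicit, and you have simply written it out in full.
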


Now, one can pose the following question.

\begin{quest}\label{pyt}
 Assume $\cont>\omega_1=\cf(\cont).$ Is it true that there exists an $\omega_1$-covering which is a Bernstein set?
\end{quest}

It is worth mentioning that in the proof of Theorem~\ref{cofi} 
we have
succeeded in constructing relevant $\omega_1$-coverings because we have been able to cover every set of size
$\omega_1$ by a set of size smaller then continuum, taken from the fixed family of size at most continuum. Let us
notice that it is not possible to answer Question~\ref{pyt} using the similar method as in the proof of
Theorem~\ref{cofi} since we have the following observation which is a special case of the fact $cov(\lambda,\lambda,cf(\lambda)^+,2)>\lambda$ (see \cite{shelah}).

\begin{thm}[see \cite{shelah}]
 Assume that $\cont=\omega_{\omega_1}.$ Then there is no family $\BB\sbe[\bbr]^{<\cont}$ of size continuum such that
 every subset of $\bbr$ of size $\omega_1$ is covered by some set from the family $\BB$.
\end{thm}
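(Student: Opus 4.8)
The plan is to transport the problem to the cardinal $\lambda=\cont=\aleph_{\omega_1}$, which is singular of cofinality $\cf(\lambda)=\omega_1$, and to prove the sharper statement that no family $\BB\sbe[\lambda]^{<\lambda}$ of size $\lambda$ covers $[\lambda]^{\omega_1}$; since $|\bbr|=\lambda$, a bijection $\phi\colon\lambda\to\bbr$ carries any putative covering of $[\bbr]^{\omega_1}$ to one of $[\lambda]^{\omega_1}$, preserving all cardinalities. A naive counting or a one-point-at-a-time diagonalization is hopeless here: there are $\lambda^{+}$-many sets to escape while a candidate has only $\omega_1$ points, and $\sup_\eta|B_\eta|$ may approach $\lambda$, so the real obstruction is the mismatch between $\cf(\lambda)=\omega_1$ and the number $\lambda$ of members of $\BB$. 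The device that breaks this is a pcf scale.

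First I would invoke Shelah's scale existence theorem: there is an increasing sequence $\seq{\lambda_i:i<\omega_1}$ of regular cardinals cofinal in $\lambda$ together with a sequence $\seq{f_\alpha:\alpha<\lambda^{+}}$ in $\prod_{i<\omega_1}\lambda_i$ that is increasing and cofinal modulo the ideal $J$ of bounded subsets of $\omega_1$. Identifying $\lambda$ with $\bigsqcup_{i<\omega_1}\lambda_i$, to each $\alpha<\lambda^{+}$ I associate the graph $X_\alpha=\set{(i,f_\alpha(i)):i<\omega_1}$, a subset of $\lambda$ of size exactly $\omega_1$.

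Suppose toward a contradiction that $\BB=\set{B_\eta:\eta<\lambda}$ witnesses the covering, and for each $\alpha$ pick $\eta(\alpha)<\lambda$ with $X_\alpha\sbe B_{\eta(\alpha)}$. Since $\lambda^{+}$ is regular and $>\lambda$, some fibre $S=\set{\alpha:\eta(\alpha)=\eta}$ has size $\lambda^{+}$, hence is cofinal in $\lambda^{+}$; fix this $\eta$ and set $b_i=\set{\xi<\lambda_i:(i,\xi)\in B_\eta}$, so that $f_\alpha(i)\in b_i$ for all $\alpha\in S$ and all $i$. The rest is elementary. The sections partition $B_\eta$, so $\sum_{i<\omega_1}|b_i|=|B_\eta|<\lambda$ and therefore $\sup_i|b_i|<\lambda$; as $\lambda=\aleph_{\omega_1}$ this yields a single $\gamma<\omega_1$ with $|b_i|\le\aleph_\gamma<\lambda_i$ for all large $i$. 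By regularity of $\lambda_i$ each such $b_i$ is then bounded, say $\sup b_i<\lambda_i$, so defining $g\in\prod_i\lambda_i$ by $g(i)=\sup b_i+1$ on this tail gives $f_\alpha(i)<g(i)$ off a bounded set of indices, i.e. $f_\alpha<_J g$ for every $\alpha\in S$. But by increasingness and cofinality of the scale the set $\set{\alpha:f_\alpha<_J g}$ is a bounded initial segment of $\lambda^{+}$, while $S$ is cofinal — a contradiction.

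The one genuinely hard input, which I would simply cite, is the existence of the $\lambda^{+}$-scale modulo the bounded ideal; granting it, everything else is pigeonhole, the regularity of the $\lambda_i$, and the trivial bound $\sup_i|b_i|\le\sum_i|b_i|=|B_\eta|<\lambda$. This is precisely the standard pcf-theoretic route to $cov(\lambda,\lambda,\cf(\lambda)^{+},2)\ge\lambda^{+}>\lambda$.
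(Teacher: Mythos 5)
Your proof is correct. Every step checks out: scale existence for the singular cardinal $\lambda=\aleph_{\omega_1}$ (an unconditional ZFC theorem of Shelah) gives the sequence $\seq{f_\alpha:\alpha<\lambda^+}$ in $\prod_{i<\omega_1}\lambda_i$, increasing and cofinal modulo the bounded ideal $J$; the pigeonhole over the regular $\lambda^+$ produces a single $B_\eta$ containing $\lambda^+$ many graphs $X_\alpha$; the computation $\sup_i|b_i|\le\sum_i|b_i|=|B_\eta|<\lambda$ together with $\lambda=\aleph_{\omega_1}$ bounds all sections by some $\aleph_\gamma$, hence (by regularity of the $\lambda_i$) strictly below $\lambda_i$ on a tail, so the function $g(i)=\sup b_i+1$ dominates $f_\alpha$ mod $J$ for all $\alpha\in S$; and $\set{\alpha:f_\alpha<_J g}$ is indeed a bounded initial segment of $\lambda^+$ (downward closed by transitivity of $<_J$, bounded by cofinality of the scale), contradicting $|S|=\lambda^+$. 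The comparison with the paper is somewhat degenerate: the paper gives no proof at all, stating the theorem with a bare citation to Shelah's \emph{Cardinal Arithmetic}, Chapter IX, as the special case $\lambda=\cont=\aleph_{\omega_1}$ of the inequality $cov(\lambda,\lambda,\cf(\lambda)^+,2)>\lambda$. What you have done is unpack that citation: rather than quoting the general covering-number inequality, you re-derive exactly the instance needed from the single black box of scale existence, and everything past that black box is elementary (pigeonhole, regularity, a diagonal bound). Your route buys a self-contained and transparent argument showing concretely where the hypothesis $\cf(\cont)=\omega_1<\cont$ is used; the paper's route buys brevity and generality, since Shelah's $cov$ result applies to any singular $\lambda$ and to covering families of size larger than $\lambda$ as well. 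Note also that your argument never uses that the covered sets have size exactly $\omega_1$ beyond $|X_\alpha|=\omega_1$, so it proves the stronger bound $cov(\lambda,\lambda,\cf(\lambda)^+,2)\ge\lambda^+$ that you state at the end.
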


If we deal with completely $\II$-nonmeasurable sets instead of Bernstein sets then we can construct even a
$\,<\!\cont$-covering on condition the $\sid$ $\II$ has the Steinhaus property and its uniformity is not too big.


It is known that the $\sid$ of null sets and the $\sid$ of meager sets have the Steinhaus property (even in more
general context -- cf.~\cite{BCS},~\cite{McShane}).

\begin{prop}
 Assume that $\II\sbe\PP(\bbr)$ is a $\sidi$ having the Steinhaus property and such that $\non(\II)<\cont$. Then there
 exists a  $\,<\hskip-2pt\cont$-covering which is completely $\II$-nonmeasurable.
\end{prop}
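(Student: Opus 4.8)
The plan is to reduce the whole statement to the construction of a single \emph{proper subgroup} of $\bbr$, after which both required properties fall out almost for free: the covering property from a quotient count, and one half of the nonmeasurability from the Steinhaus property. Concretely, I claim it suffices to produce a subgroup $G\sbe\bbr$ such that (i) $G$ is $\II$-positive everywhere, i.e. $(\fa A\in\Borel(\bbr)\sm\II)\;A\cap G\notin\II$, and (ii) $[\bbr:G]=\cont$. Granting such a $G$, I set $N=\bbr\sm G$. For the covering property, fix $B\sbe\bbr$ with $|B|<\cont$ and let $\pi\colon\bbr\to\bbr/G$ be the quotient homomorphism; since $|\pi(B)|\le|B|<\cont=|\bbr/G|$, I may pick $\gamma\in(\bbr/G)\sm(-\pi(B))$ and $x\in\bbr$ with $\pi(x)=\gamma$, so that $\pi(x+b)=\gamma+\pi(b)\ne 0$ for every $b\in B$, whence $x+B\sbe\bbr\sm G=N$; thus $N$ is a $<\!\cont$-covering.

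For complete $\II$-nonmeasurability of $N$, fix $A\in\Borel(\bbr)\sm\II$. The half $A\cap(\bbr\sm N)=A\cap G\notin\II$ is exactly (i). For the other half, suppose toward a contradiction that $A\cap N=A\sm G\in\II$. Using the Borel base pick a Borel $Z\in\II$ with $A\sm G\sbe Z$; then $A':=A\sm Z\sbe G$ is Borel and $A'\notin\II$. By the Steinhaus property applied to the pair $A',A'$ there is a nonempty open $U$ with $U\sbe A'-A'\sbe G-G=G$, so the subgroup $G$ contains a nonempty open set and is therefore open, hence $G=\bbr$, contradicting (ii). Thus $A\sm G\notin\II$, and $N$ is completely $\II$-nonmeasurable. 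This reduction is the crux conceptually, and it is precisely here that the Steinhaus property does its work.

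It remains to build $G$ with (i) and (ii). I will construct it by transfinite recursion of length $\cont$ along an enumeration $\set{A_\alpha:\alpha<\cont}$ of $\Borel(\bbr)\sm\II$, as an increasing union of $\bbq$-subspaces $G_\alpha$ with $|G_\alpha|<\cont$. The mechanism securing (i), where $\non(\II)<\cont$ enters, is the following claim: every $A\in\Borel(\bbr)\sm\II$ contains a non-$\II$ subset of size $\non(\II)$. Indeed, fix $W_0\notin\II$ with $|W_0|=\non(\II)$ and a countable dense subgroup $Q\sbe\bbr$; by Fact~\ref{stein} we have $(A+Q)^c\in\II$, hence $W_0\cap(A+Q)\notin\II$, so $W_0\cap(A+q)\notin\II$ for some $q\in Q$, and then $(W_0-q)\cap A\sbe A$ is a non-$\II$ subset of $A$ of size at most $\non(\II)<\cont$. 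At stage $\alpha$ I adjoin to $G_\alpha$ such a set $W_\alpha\sbe A_\alpha$; since $\non(\II)<\cont$ all the subspaces stay of size $<\cont$, and at the end $A_\alpha\cap G\supseteq W_\alpha\notin\II$, giving (i).

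The hard part will be maintaining (ii), i.e. keeping the index equal to $\cont$ so that $G$ stays far from exhausting $\bbr$. I plan to carry along the recursion a $\bbq$-independent set $E_\alpha=\set{e_\beta:\beta<\alpha}$ kept independent over $G_\alpha$, so that $\set{e_\beta:\beta<\cont}$ eventually witnesses $[\bbr:G]=\cont$; choosing $e_\alpha$ outside $\operatorname{span}(G_{\alpha+1}\cup E_\alpha)$ is always possible as that span has size $<\cont$. The genuine obstacle is that a later $W_\beta$ must not drag an earlier $e_\alpha$ into $G$, which forces me to select each $W_\beta\sbe A_\beta$ in sufficiently general position relative to $\operatorname{span}(G_\beta\cup E_\beta)$ while keeping it non-$\II$. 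This is exactly where the hypothesis $\non(\II)<\cont$ is indispensable: at each stage only $<\cont$ independence constraints are active, whereas $A_\beta$ has size $\cont$ and, by the claim above, is rich in non-$\II$ subsets, so the two demands can be reconciled. Verifying that these choices can be made uniformly throughout the recursion is the technical heart of the argument; once it is in place, the reduction above finishes the proof.
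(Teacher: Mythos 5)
Your reduction to a subgroup $G\sbe\bbr$ with (i) $A\cap G\notin\II$ for every $A\in\Borel(\bbr)\sm\II$ and (ii) $[\bbr:G]=\cont$ is correct, and both halves of it are sound: the quotient-counting argument gives the $<\!\cont$-covering, and the Steinhaus argument (an $\II$-positive Borel subset of $G$ would make $G$ contain a nonempty open set, hence be an open subgroup of the connected group $\bbr$, hence all of $\bbr$) even yields the full positivity $A\sm G\notin\II$ rather than mere nonemptiness. The genuine gap is exactly where you admit it: the existence of $G$. Moreover, your sketched mechanism for securing (ii) is not merely unfinished but unjustified as stated. The constraint at stage $\beta$ bears on the \emph{whole set} $W_\beta$, not on its points: you must prevent every nontrivial rational combination of elements of $W_\beta$ together with $G_\beta$ from meeting the span of the $e_\gamma$'s, and the counting argument ``fewer than $\cont$ active constraints versus $\cont$ points of $A_\beta$'' only lets you dodge constraints one point at a time. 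Nor can you thin the candidate $W_\beta=(W_0-q)\cap A_\beta$ to avoid the span of $G_\beta\cup E_\beta$: that span contains the earlier sets $W_\gamma$, so it is itself $\II$-positive, and removing it may leave a set in $\II$. As written, condition (ii) is unverified and the proof is incomplete.

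The irony is that your own Claim closes the gap instantly and makes the entire recursion, and the independent family $E_\alpha$, unnecessary: every non-$\II$ set it produces has the form $(W_0-q)\cap A$ with $q\in\bbq$, hence lies in $W_0+\bbq$. So simply take $G$ to be the group generated by $W_0\cup\bbq$; then $|G|\le\non(\II)\cdot\omega<\cont$, so all cosets have size $<\cont$ and $[\bbr:G]=\cont$ is automatic, while (i) holds because for each $A\in\Borel(\bbr)\sm\II$ Fact~\ref{stein} gives $q\in\bbq$ with $(W_0-q)\cap A\notin\II$ and $(W_0-q)\cap A\sbe A\cap G$. With this repair your argument essentially collapses to the paper's proof, which dispenses with group structure altogether: it fixes $N\notin\II$ of size $\non(\II)$, takes $C=\bbr\sm(N+\bbq)$, uses the Steinhaus property to show every $\II$-positive Borel set meets $N+\bbq$, notes it meets $C$ as well since $|N+\bbq|<\cont$, and obtains the $<\!\cont$-covering property by a pure pigeonhole count, valid for \emph{any} set with complement of size $<\cont$: if some $A$ with $|A|<\cont$ had $(A+x)\cap(\bbr\sm C)\ne\es$ for all $x$, then continuum many $x$ would share one witness $c=a_x+x$, forcing $|A|=\cont$. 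Your quotient argument and your Borel-base bootstrap are correct alternatives to these two steps, but the subgroup machinery, and in particular the transfinite construction you left open, buys nothing that the single set $N+\bbq$ does not already provide.
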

\begin{proof}
Let us fix a set $N\notin\II$ such that $|N|=\non(\II)$ and put $C=\bbr\sm(N+\bbq)$. Suppose now that
$B\in\Borel(\bbr)\sm\II$. Then from the Steinhaus property of $\II$ we obtain that there exists a rational $q\in\bbq$
such that $q\in (\bbr\sm C)-B$. Hence $B\cap(\bbr\sm C)\ne\es$. As $|\bbr\sm C|<\cont$ we have also $B\cap C\ne\es$,
so the set $C$ is completely $\II$-nonmeasurable.

Moreover, the set $C$ is a $\,<\hskip-3pt\cont$-covering. Indeed, suppose that there exists a set
$A\in[\bbr]^{<\cont}$ such that for every $x\in\bbr$ we obtain $(A+x)\cap(\bbr\sm C)\not=\es$. For every $x\in\bbr$
let us fix $a_x\in A$ such that $a_x+x\in\bbr\sm C$. As $|\bbr\sm C|<\cont$ then there exist a set $T\sbe\bbr$ of
size continuum and a real $c\in\bbr\sm C$ such that for every $x\in T$ we have $a_x+x=c$. But it means that all reals
$a_x=c-x$ are different. Thus $|A|=|T|=\cont$ which is a contradiction.
\end{proof}

\section{S-coverings}

We can interpret $\kappa$-coverings in terms of coloring sets. Namely, we can treat a $\kappa$-covering
as set which can color every set of size $\kappa$ monochromatically. From this point of view we may ask
about a family of sets which can color every set of size $\kappa$ in such a way that different points in the
given set have different colors. This leads us to the following definition.

\begin{defn}\label{scovdef}
 A family $\AA$ of pairwise disjoint subsets of the real line is called a
 \emph{$\kappa$-S-covering} if $|\AA|=\kappa$ and
 $$(\fa F\in [\bbr ]^\kappa)(\ex t\in\bbr)\Big(F+t\sbe\Sum\AA\,\land\,(\fa A\in\AA)|(F+t)\cap A|=1\Big).$$
\end{defn}

\noindent
This definition is reasonable also for other uncountable abelian Polish groups.

First we prove a relation between 2-S-coverings and 2-coverings.

\begin{thm}\label{2scov}
 Assume that $\set{A_0,A_1}$ is a partition of the real line and a $2$-S-covering. Then at least one of the sets
 $A_0,A_1$ is a $2$-covering.
\end{thm}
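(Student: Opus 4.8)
The plan is to argue by contraposition: I will show that if neither $A_0$ nor $A_1$ is a $2$-covering, then the partition $\set{A_0,A_1}$ fails to be a $2$-S-covering. The first step is to reformulate both notions in terms of difference sets. A two-element set $\set{a,b}$ with $b-a=d\neq 0$ can be translated into $A_i$ precisely when $d\in A_i-A_i$, so $A_i$ is a $2$-covering iff $A_i-A_i\sbe$ is all of $\bbr\sm\set{0}$. Hence "$A_i$ is not a $2$-covering" supplies a witness $d_i\neq 0$ with $A_i\cap(A_i+d_i)=\es$, i.e. no two points of $A_i$ differ by $d_i$. Using the symmetry $A_i\cap(A_i+d_i)=\es\iff A_i\cap(A_i-d_i)=\es$, I may assume $d_0,d_1>0$. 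In parallel, writing the coloring $c\colon\bbr\to\set{0,1}$ by $c(x)=i\iff x\in A_i$, the $2$-S-covering property for the partition unwinds to: for every $d\neq 0$ there is an $x$ with $c(x)\neq c(x+d)$ (one of the two translated points lands in $A_0$, the other in $A_1$).

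The crux is to manufacture an explicit period of $c$ out of $d_0$ and $d_1$. Because $\set{A_0,A_1}$ is a \emph{partition} (so "not in $A_0$" means "in $A_1$" and vice versa), the two forbidden-difference conditions become the implications
$$c(x)=0\ \then\ c(x+d_0)=1,\qquad c(x)=1\ \then\ c(x+d_1)=0.$$
Set $p:=d_0+d_1>0$. If $c(x)=0$, the first implication gives $c(x+d_0)=1$ and then the second (applied at $x+d_0$) gives $c(x+d_0+d_1)=0$; if $c(x)=1$, the second gives $c(x+d_1)=0$ and then the first (applied at $x+d_1$) gives $c(x+d_1+d_0)=1$. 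In both cases $c(x+p)=c(x)$ for all $x\in\bbr$, so $p$ is a nonzero period of $c$, equivalently $A_0+p=A_0$ and $A_1+p=A_1$.

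Finally I would derive the contradiction: applying the $2$-S-covering property to the pair $F=\set{0,p}$ yields a translate $t$ with $c(t)\neq c(t+p)$, which directly contradicts $p$-periodicity. This closes the contrapositive and hence establishes the theorem. The only points I would watch carefully are the use of the partition hypothesis $A_0\cup A_1=\bbr$ (the chaining implications genuinely fail for merely disjoint families) and the sign bookkeeping that guarantees $p=d_0+d_1\neq 0$; beyond that the argument is just the short implication chain above.
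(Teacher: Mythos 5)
Your argument is correct and is essentially the paper's own proof: the paper likewise takes positive witnesses $a=d_0$, $b=d_1$ for the failure of the $2$-covering property, chains the two implications through the partition to conclude that $x$ and $x+a+b$ always lie in the same piece, and thus that $\set{0,a+b}$ cannot be S-covered. Your periodicity framing and explicit sign bookkeeping (guaranteeing $d_0+d_1\neq 0$) are just a slightly more formal packaging of the same chain, which the paper handles implicitly by choosing the witnesses positive.
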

\begin{proof}
 Assume that none of the sets $A_0,A_1$ is a 2-covering. It means that there are positive reals $a,b$ such that
 for every $x,y\in A_0$ we have $x-y\not=a$ and for every $x,y\in A_1$ we have $x-y\not=b$.
 We will show that the set $\set{0,a+b}$ cannot be S-covered by $\set{A_0,A_1}$.

 Indeed, let us fix any $x\in A_0$. Then $x+a\in A_1$ and, consequently, $x+a+b\in A_0$. Analogously, if
 $x\in A_1$ then $x+b+a\in A_1$, which ends the proof.
\end{proof}

Now we focus our attention on constructing $\kappa$-S-coverings which consist of Bernstein sets or completely
$\II$-nonmeasurable sets and such that none of their elements is a $\kappa$-covering (which is opposite to the
situation from Theorem~\ref{2scov}).

\begin{thm}\label{scover-partycja}
 Let $\kappa$ be a cardinal number such that $2<\kappa<\cont$. If $2^\kappa\le\cont$ then there exists a partition
 $\set{B_\xi:\xi<\kappa}$ of the real line such that
 \begin{enumerate}
  \item $(\fa\xi<\kappa)\,B_\xi$ is a Bernstein set,
  \item $(\fa\xi<\kappa)\,B_\xi$ is not a $2$-covering,
  \item $\set{B_\xi:\xi<\kappa}$ is a $\kappa$-S-covering.
 \end{enumerate}
\end{thm}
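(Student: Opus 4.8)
The plan is to construct the partition $\set{B_\xi:\xi<\kappa}$ by transfinite recursion of length $\cont$, building increasing approximations $B_\xi^\alpha$ and enumerating along the way all perfect sets (for the Bernstein condition) and all $\kappa$-element subsets of $\bbr$ (for the S-covering condition). Fix enumerations $\Perf(\bbr)=\set{P_\alpha:\alpha<\cont}$ and $[\bbr]^\kappa=\set{F_\alpha:\alpha<\cont}$; this last enumeration has length exactly $\cont$ because $|[\bbr]^\kappa|=\cont^\kappa=2^\kappa\le\cont$, which is precisely where the hypothesis $2^\kappa\le\cont$ is used. At stage $\alpha$ I will have sets $B_\xi^\alpha$, each of size at most $|\alpha|$, pairwise disjoint, and I maintain a running ``small'' union $S_\alpha=\Sum_{\xi<\kappa}B_\xi^\alpha$ of size less than $\cont$.

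I would handle the three requirements by three types of action at each stage. For the Bernstein requirement (1), at stage $\alpha$ I add one fresh point of $P_\alpha$ to each $B_\xi$, choosing these $\kappa$ points distinct and outside $S_\alpha$; since $|P_\alpha|=\cont$ and $|S_\alpha|<\cont$ while $\kappa<\cont$, there is room. For the S-covering requirement (3), I must find a translate $F_\alpha+t$ that meets each block in exactly one point. The natural approach is to fix a bijection (or injection) $\phi:F_\alpha\to\kappa$ and seek a translation parameter $t$ and a placement so that the $\xi$-th element of $F_\alpha$ lands in $B_\xi$; concretely I look for $t\in\bbr$ such that the $\kappa$ chosen elements $f+t$ are all new points (avoiding $S_\alpha$ and each other), and then assign the element matched to color $\xi$ into $B_\xi$. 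The set of ``bad'' $t$ for which some $f+t$ already lies in $S_\alpha$ has size at most $|F_\alpha|\cdot|S_\alpha|=\kappa\cdot|S_\alpha|<\cont$, so a good $t$ exists; this simultaneously realizes the S-covering demand for $F_\alpha$ while adding only a small set of new points.

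The main obstacle, and the step demanding the most care, is requirement (2): \emph{no $B_\xi$ is a $2$-covering}, carried out \emph{simultaneously} with the S-covering placements, since the latter force many new points into the blocks and could accidentally create a $2$-covering. The clean way to guarantee (2) is to reserve, for each $\xi$, a ``forbidden distance'' $d_\xi>0$ and enforce the invariant $(\fa x,y\in B_\xi)(x-y\ne d_\xi)$ throughout; then $\set{0,d_\xi}$ witnesses that $B_\xi$ is not a $2$-covering, exactly as in Theorem~1.7's first construction. I would choose the $d_\xi$ pairwise distinct at the start, and at every stage, when I am about to add a point $p$ to $B_\xi$ (whether for the Bernstein step, the S-covering step, or the coordinating bookkeeping), I insist that neither $p+d_\xi$ nor $p-d_\xi$ already lies in $B_\xi$. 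Because each point forbids only two translates and all the sets involved are of size $<\cont$, the freedom to choose fresh points of $P_\alpha$ and a suitable translation parameter $t$ survives these extra constraints. The delicate bookkeeping is ensuring that the single point dropped into each block by the S-covering step can always be chosen to respect its block's forbidden distance; this is where one must verify that the set of excluded $t$ (now also excluding those $t$ making some $f+t$ equal to $p\pm d_\xi$ for an existing $p$) still has size below $\cont$, which it does since only $\kappa\cdot|S_\alpha|<\cont$ values are removed. Finally I set $B_\xi=\Sum_{\alpha<\cont}B_\xi^\alpha$ and extend to a genuine partition of $\bbr$ by distributing leftover reals among the blocks, taking care in that last step to respect each $d_\xi$-constraint so that (2) is preserved.
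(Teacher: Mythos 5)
Your skeleton matches the paper's (same transfinite recursion of length $\cont$, same enumerations with $2^\kappa\le\cont$ guaranteeing $|[\bbr]^\kappa|\le\cont$, same one-point-per-block realization of the S-covering demand, same forbidden-distance idea for (2)), but there is a genuine gap at the point where you turn the disjoint family into a partition. You reserve \emph{pairwise distinct} distances $d_\xi$, $\xi<\kappa$, and at the very end ``distribute leftover reals among the blocks, taking care to respect each $d_\xi$-constraint'' --- but you give no argument that this is possible, and with distinct distances it can genuinely fail. A real $r$ can be placed into block $\xi$ only if $B_\xi\cap\set{r-d_\xi,\,r+d_\xi}=\es$; since this pair of excluded points varies with $\xi$, nothing in your invariants prevents the situation where, for every $\xi<\kappa$, some earlier S-covering or Bernstein step has already put $r+d_\xi$ into $B_\xi$, so that all $\kappa$ blocks simultaneously refuse $r$. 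No cardinality estimate rescues this: each refusal is witnessed by a single point, and your construction never constrains the points of $B_\xi$ relative to the other blocks' distances. Tellingly, your argument never uses the hypothesis $\kappa>2$, yet that hypothesis is essential: by Theorem~\ref{2scov}, in any partition of $\bbr$ into two sets forming a $2$-S-covering, one part must be a $2$-covering, so the statement is false for $\kappa=2$ and a correct proof must invoke $\kappa\ge 3$ somewhere.

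The paper's proof repairs exactly this step: it uses one \emph{common} forbidden distance ($d_\xi=1$ for all $\xi$) and absorbs each real $r_\alpha$ \emph{during} the recursion rather than at the end. With a common distance, pairwise disjointness of the blocks implies each of the two points $r_\alpha-1$, $r_\alpha+1$ lies in at most one block, so at most two blocks refuse $r_\alpha$, and $\kappa\ge 3$ guarantees a home for it --- the only place $2<\kappa$ is used. A single common distance causes no trouble in the S-covering and Bernstein steps, for the same reason your scheme works there: each block receives only one point of $t_\alpha+F_\alpha$ and one point of $P_\alpha$ per stage, so distance-$1$ pairs inside the translate (or inside the chosen set $Y\sbe P_\alpha$) land in different blocks and never violate the within-block invariant. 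If you replace your distinct $d_\xi$'s by the constant distance $1$ and move the leftover-distribution into the induction (placing $r_\alpha$ at stage $\alpha$, as the paper does via its condition (4)), your argument becomes complete and coincides with the paper's.
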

\begin{proof}
  Let $\Perf(\bbr)=\set{P_\alpha:\alpha<\cont}$ and $\bbr=\set{r_\alpha:\alpha<\cont}$ be fixed enumerations
  of all perfect subsets of the real line and of the reals, respectively. Let us also enumerate the set
  $[\bbr]^\kappa=\set{F_\alpha:\alpha<\cont}$.
  By transfinite induction we build a sequence $(\set{A^\alpha_\xi:\xi<\kappa})_{\alpha<\cont}$ of families of
  subsets of $\bbr$ of size less than continuum such that for every $\alpha<\cont$ the following conditions are
  fulfilled:
 \begin{enumerate}
  \item for every different $\xi_1,\xi_2<\kappa$ the sets $A^\alpha_{\xi_1}$ and $A^\alpha_{\xi_2}$ are disjoint;
  \item for every $\xi<\kappa$ the intersection $A^\alpha_\xi\cap P_\alpha$ is nonempty;
  \item for every $\xi<\kappa$ there exists $t_\alpha\in\bbr$ such that $|(t_\alpha+F_\alpha)\cap A^\alpha_\xi|=1$ and $t_\alpha+F_\alpha\subseteq\bigcup_{\xi\in\kappa}A^\alpha_\xi$;
  \item there exists $\xi<\kappa$ such that $r_\alpha\in A^\alpha_\xi$;
  \item for every $\xi<\kappa$ and every $\beta<\alpha$ we have $A^\beta_\xi\sbe A^\alpha_\xi$;
  \item for every $\xi<\kappa$ and every $x,y\in A^\alpha_\xi$ we have $|x-y|\ne 1$;
  \item $|A^\alpha_\xi|\le|\alpha|\cdot\omega.$
 \end{enumerate}
Suppose that we have already constructed the sequence $(\set{A^\beta_\xi:\xi<\kappa})_{\beta<\alpha}$ for some
$\alpha<\cont$. Let $A_\xi=\Sum_{\beta<\alpha} A^\beta_\xi$ and $A=\Sum_{\xi<\kappa} A_\xi$. We can observe that
there are not many "bad translations" of the set $F_\alpha$, namely the set
 $$T=\set{t\in\bbr:(\ex x\in F_\alpha)(\ex a\in A)\,|t+x-a|=1\, \lor\, t+x=a}$$
has the cardinality less then $\cont$. Thus we can choose a real $t_\alpha\notin T$.
Next we choose a subset $Y\sbe P_\alpha$ of size $\kappa$ such that
 $$(Y+\set{0,1,-1})\cap ((t_\alpha+F_\alpha)\cup A)=\es.$$
Let $\set{a_\xi:\xi<\kappa}$ and $\set{b_\xi:\xi<\kappa}$ be enumerations of sets $t_\alpha+F_\alpha$ and $Y$,
respectively, and let $\hat{A}^\alpha_\xi=A_\xi\cup\set{a_\xi,b_\xi}$ for $\xi<\kappa$. Finally, if
$r_\alpha\notin Y\cup(t_\alpha+F_\alpha)\cup A$ then we fix $\xi_0<\kappa$ such that
$\hat{A}^\alpha_{\xi_0^{\ }}\cap\set{r_\alpha-1,r_\alpha+1}=\es$ and put
$A^\alpha_{\xi_0^{\ }}=\hat{A}^\alpha_{\xi_0^{\ }}\cup\set{r_\alpha}$. In all other cases we put
$A^\alpha_{\xi}=\hat{A}^\alpha_{\xi}$ and our construction is completed.

Let $B_\xi=\Sum_{\alpha<\cont}A_\xi^\alpha$ for $\xi<\kappa$. Then $B_\xi$ is a Bernstein set thanks to
the condition (2) and is not a 2-covering thanks to the conditions (5) and (6). The conditions (1) and (4) ensure us
that the family $\set{B_\xi:\xi<\kappa}$ is a partition of $\bbr$ and the condition (3) makes this family
a $\kappa$-S-covering.
\end{proof}

\begin{rem}
Let us observe that if $\kappa$ is countable then the condition $2^\kappa\le\cont$ is fulfilled. In general
we need extra set theoretic assumptions. For example it is enough to assume Martin's Axiom, which implies that
$2^\kappa=\cont$ for $\omega\le\kappa<\cont$ (see~\cite{jech}).
\end{rem}

In more general situation, constructing S-coverings consisting of completely $\II$-nonmeasurable subsets of a given
Polish group, none of which is a 2-covering is a bit more complicated. That is why we need some additional
assumptions about a $\sid$ $\II$.

\begin{thm}\label{scover_grupa}
Let $(X,+)$ be an uncountable abelian Polish group with a complete metric $d$. Let $\II\sbe\PP(X)$ be a $\sidi$ such
that 
  $$(\forall B\in\Borel(X)\setminus\II)(\fa \DD\in [\II]^{<\cont})\, |B\sm \Sum\DD|=\cont$$
and there exists $a\in rng(d),\ a\neq 0$ such that
  $$(\fa x\in X)\,\set{y\in X:d(x,y)=a}\in\II.$$
If $\kappa$ is a cardinal number such that $2^\kappa=\cont$, then there exists a family $\set{B_\xi:\xi<\kappa}$ of
pairwise disjoint subsets of $X$ such that
 \begin{enumerate}
  \item $(\fa\xi<\kappa)\,B_\xi$ is a completely $\II$-nonmeasurable set,
  \item $(\fa\xi<\kappa)\,B_\xi$ is not a $2$-covering,
  \item $\set{B_\xi:\xi<\kappa}$ is a $\kappa$-S-covering.
 \end{enumerate}
\end{thm}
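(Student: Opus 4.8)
The plan is to run the transfinite construction of the proof of Theorem~\ref{scover-partycja}, with two systematic substitutions: the role of the perfect sets $P_\alpha$ is taken over by the Borel $\II$-positive sets, and the role of the unit distance is taken over by the distinguished distance $a$. Since $X$ is an abelian Polish group we may take $d$ to be translation invariant, so that $d(x+t,y+t)=d(x,y)$; this is what lets us certify failure of the $2$-covering property from a distance condition. Fix $p,q\in X$ with $d(p,q)=a$, and note that every translate of the two-point set $\set{p,q}$ consists of two points at distance $a$. From $2^\kappa=\cont$ we get $\omega\le\kappa<\cont$ and $|[X]^\kappa|=\cont$, so we may fix enumerations $\Borel(X)\sm\II=\set{B_\alpha:\alpha<\cont}$ and $[X]^\kappa=\set{F_\alpha:\alpha<\cont}$.

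By recursion I would build increasing families $(\set{A^\alpha_\xi:\xi<\kappa})_{\alpha<\cont}$ of pairwise disjoint sets, each of size $<\cont$, so that at stage $\alpha$: (a) each $A^\alpha_\xi$ meets $B_\alpha$; (b) no two points of a single $A^\alpha_\xi$ lie at distance $a$; and (c) a translate $t_\alpha+F_\alpha$ is split among the blocks one point per index, i.e. $t_\alpha+F_\alpha\sbe\Sum_\xi A^\alpha_\xi$ and $|(t_\alpha+F_\alpha)\cap A^\alpha_\xi|=1$. Writing $A$ for the union of everything built before stage $\alpha$ (so $|A|<\cont$), I would first choose $t_\alpha$ so that $t_\alpha+F_\alpha$ is disjoint from $A$ and has no point at distance $a$ from a point of $A$; the forbidden values of $t_\alpha$ lie in $(A-F_\alpha)$ together with $\bigcup\set{\set{y:d(y,p)=a}-x:\,x\in F_\alpha,\ p\in A}$, a union of fewer than $\cont$ translates of spheres, each in $\II$ by hypothesis. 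Applying the covering hypothesis with $B=X$ gives $|X\sm\Sum\DD|=\cont$, so a good $t_\alpha$ exists. Enumerating $t_\alpha+F_\alpha=\set{a_\xi:\xi<\kappa}$, for each $\xi$ I would then pick $b_\xi\in B_\alpha$ lying off the (fewer than $\cont$ many) spheres of radius $a$ centred at the points of $A\cup(t_\alpha+F_\alpha)$ and distinct from all previously chosen points; here the covering hypothesis gives $|B_\alpha\sm\Sum(\text{those spheres})|=\cont$, so such distinct $b_\xi$ exist. Setting $A^\alpha_\xi=A_\xi\cup\set{a_\xi,b_\xi}$, with $A_\xi=\Sum_{\beta<\alpha}A^\beta_\xi$, preserves (a)--(c), disjointness, and the distance condition. (One can also absorb an enumeration of $X$, exactly as in Theorem~\ref{scover-partycja}, to make the family a partition, but this is not needed here.)

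Finally I would put $B_\xi=\Sum_{\alpha<\cont}A^\alpha_\xi$. Disjointness across stages is immediate from the increasing, stagewise-disjoint construction. Each $B_\xi$ meets every $B_\alpha$ via $b_\xi$, and since $\kappa\ge2$ the complement of $B_\xi$ also meets $B_\alpha$ via any other index; using that $\II$ has a Borel base one checks that a set which, together with its complement, meets every Borel $\II$-positive set is precisely a completely $\II$-nonmeasurable set, giving~(1). No two points of $B_\xi$ lie at distance $a$, so by invariance no translate of $\set{p,q}$ can be contained in $B_\xi$, giving~(2). For~(3), each prescribed $F=F_\alpha$ has the witnessing translate $t_\alpha+F_\alpha\sbe\Sum_\xi B_\xi$ meeting $B_\xi$ in the single point $a_\xi$; maintaining disjointness at every later stage guarantees that no second point of $t_\alpha+F_\alpha$ is ever added to any $B_\xi$, so the ``exactly one'' clause survives to the limit and $\set{B_\xi:\xi<\kappa}$ is a $\kappa$-S-covering.

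The main obstacle is reconciling (a) and (b): unlike on the real line, where the unit sphere about a point is finite, a sphere of radius $a$ here may have size $\cont$, so it cannot be dodged by a cardinality count. The construction hinges on the two hypotheses meshing for exactly this purpose --- spheres of radius $a$ lie in $\II$, and the covering hypothesis guarantees that deleting fewer than $\cont$ members of $\II$ from any Borel $\II$-positive set (or from $X$ itself) still leaves $\cont$ many points. This is what lets one keep hitting the sets $B_\alpha$ (for complete $\II$-nonmeasurability) while never creating a pair at distance $a$ (for failure of $2$-covering). A secondary point demanding care is the persistence of the one-point-per-block clause of the S-covering under all later stages, which is why disjointness must be maintained throughout the recursion rather than only in the limit.
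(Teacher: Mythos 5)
Your proposal is correct and is essentially the paper's own proof: the same transfinite recursion through enumerations of $\Borel(X)\sm\II$ and $[X]^\kappa$, the same set of ``bad translations'' (fewer than $\cont$ translates of $a$-spheres, each in $\II$ by the sphere hypothesis plus group-invariance of $\II$, together with $A-F_\alpha$), the same appeal to the covering hypothesis to pick $t_\alpha$ and the points witnessing complete $\II$-nonmeasurability inside each positive Borel set, and the same limit sets $B_\xi=\Sum_{\alpha<\cont}A^\alpha_\xi$, with your one-point-at-a-time choice of the $b_\xi$ merely replacing the paper's single set $Y\sbe P_\alpha$. One small caveat: translation-invariance of $d$, which both you and the paper need for clause (2), cannot literally be obtained ``without loss of generality'' by switching metrics, since the hypothesis that $a$-spheres lie in $\II$ is tied to the given $d$ --- it must simply be assumed, as the paper does tacitly.
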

\begin{proof}
 Without loss of generosity $a=1.$
 Let $\Borel(X)\sm\II=\set{P_\alpha:\alpha <\cont}$ be an enumeration of all $\II$-positive Borel subsets of $X$.
 Let us also enumerate the set $[X]^\kappa=\set{F_\alpha:\alpha<\cont}$. We proceed similarly as in the proof of
 Theorem~\ref{scover-partycja}, constructing a sequence $(\set{A^\alpha_\xi:\xi<\kappa})_{\alpha<\cont}$ of
 families of subsets of $X$ of size less than continuum such that for every $\alpha<\cont$ the following
 conditions are fulfilled:
 \begin{enumerate}
  \item for every different $\xi_1,\xi_2<\kappa$ the sets $A^\alpha_{\xi_1}$ and $A^\alpha_{\xi_2}$ are disjoint;
  \item for every $\xi<\kappa$ the intersection $A^\alpha_\xi\cap P_\alpha$ is nonempty;
  \item for every $\xi<\kappa$ there exists $t_\alpha\in X$ such that $|(t_\alpha+F_\alpha)\cap A^\alpha_\xi|=1$ and $t_\alpha+ F_\alpha\subseteq\bigcup_{\xi<\kappa}A^\alpha_\xi$;
  \item for every $\xi<\kappa$ and every $\beta<\alpha$ we have $A^\beta_\xi\sbe A^\alpha_\xi$ and $|A^\alpha_\xi|\le|\alpha|\cdot\omega$;
  \item for every $\xi<\kappa$ and every $x,y\in A^\alpha_\xi$ we have $d(x,y)\ne 1$.
 \end{enumerate}
Assume that we are at an $\alpha$th step of the construction and let $A_\xi=\Sum_{\beta<\alpha} A^\beta_\xi$ and
$A=\Sum_{\xi<\kappa} A_\xi$. Moreover, let $C=\Sum_{x\in F_\alpha}\Sum_{a\in A}\set{t\in X:d(t+x,a)=1}$.
Then the set $T=C\cup(A-F_\alpha)$ is the set of "bad translations" of the set $F_\alpha$. But $C$ is a
collection of less then continuum many unit spheres and $|A-F_\alpha|<\cont$ so according to our assumptions
the complement of $T$ is of size continuum. Thus we can choose $t_\alpha\notin T$.

Analogously, we can choose a subset $Y\sbe P_\alpha$ of size $\kappa$ such that
 $$Y\cap((t_\alpha+F_\alpha)\cup A\cup\set{x\in X:(\ex a\in(t_\alpha+F_\alpha)\cup A)\,d(x,a)=1})=\es.$$
Finally, we enumerate sets $t_\alpha+F_\alpha=\set{a_\xi:\xi<\kappa}$ and $Y=\set{b_\xi:\xi<\kappa}$, put
$A^\alpha_\xi=A_\xi\cup\set{a_\xi,b_\xi}$ for $\xi<\kappa$ and we are done.

Let $B_\xi=\Sum_{\alpha<\cont}A_\xi^\alpha$ for $\xi<\kappa$. Then $\set{B_\xi:\xi<\kappa}$
is the needed family.
\end{proof}

\begin{rem}\label{rem_add}
Let us observe that in Theorem~\ref{scover_grupa} we can replace the assumption
 $$(\forall B\in\Borel(X)\setminus\II)(\fa \DD\in [\II]^{<\cont})\, |B\sm \Sum\DD|=\cont$$
by a stronger, but shorter assumption, namely $\add(\II)=\cont$.
\end{rem}

When our Polish space is simply a Euclidian vector space, we can omit one assumption in
Theorem~\ref{scover_grupa}.

\begin{cor}\label{cor_grupa}
Let $\II\sbe\PP(\bbr^n)$ be a $\sidi$ containing all unit spheres. Then for every cardinal number
$\kappa$ such that $2^\kappa=\cont$ there exists a family $\set{B_\xi:\xi<\kappa}$ of pairwise
disjoint subsets of $X$ such that
 \begin{enumerate}
  \item $(\fa\xi<\kappa)\,B_\xi$ is a completely $\II$-nonmeasurable set,
  \item $(\fa\xi<\kappa)\,B_\xi$ is not a $2$-covering,
  \item $\set{B_\xi:\xi<\kappa}$ is a $\kappa$-S-covering.
 \end{enumerate}
\end{cor}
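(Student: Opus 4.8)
The plan is to deduce the corollary straight from Theorem~\ref{scover_grupa}, specialised to $X=\bbr^n$ with its Euclidean metric $d$: I would show that the single hypothesis ``$\II$ contains all unit spheres'' already delivers \emph{both} structural assumptions that Theorem~\ref{scover_grupa} places on the ideal. The sphere assumption is free, and that is the point of the corollary: for any $x\in\bbr^n$ the set $\set{y:d(x,y)=1}$ is exactly the unit sphere centred at $x$, hence lies in $\II$, so the second hypothesis holds with $a=1$. All the real work is to supply the first (covering) hypothesis
$$(\fa B\in\Borel(\bbr^n)\sm\II)(\fa\DD\in[\II]^{<\cont})\ |B\sm\Sum\DD|=\cont,$$
after which the family $\set{B_\xi:\xi<\kappa}$ is produced verbatim by Theorem~\ref{scover_grupa}. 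It is worth noting that inside that proof the only families $\DD$ to which the covering hypothesis is ever applied consist of unit spheres (the set $C$) together with fewer than $\cont$ singletons, so it suffices to establish the covering property in precisely that generality rather than for an arbitrary $\DD\in[\II]^{<\cont}$.

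To obtain the covering property I would invoke the tiny-set machinery of Lemma~\ref{covering} with $\AA$ taken to be the family of all unit spheres together with all singletons (all of which lie in $\II$), using a straight line $L$ as the witnessing perfect set. Condition~(1) of tininess is pure Euclidean geometry: a line meets a unit sphere in at most two points and a singleton in at most one, so $|(L+t)\cap A|\le 2$ for every translate and every $A\in\AA$. Granting condition~(2), Lemma~\ref{covering} gives that no subfamily of $\AA$ of size $<\cont$ can cover an $\II$-positive Borel set, and this is exactly the statement of Lemma~\ref{covering2} read for the present $\II$ in place of $\bbk,\bbl$. Upgrading ``not covered'' to ``$\cont$ points survive'' is then routine and is the role played by the remark after Theorem~\ref{scover_grupa}: if $|B\sm\Sum\DD|<\cont$ for $\DD$ a family of unit spheres, I would absorb the $<\cont$ leftover points into $\DD$ as singletons, still landing inside $\AA$ and below $\cont$ in size, and contradict the non-covering statement; since $\II$ has a Borel base the surviving remainder contains a perfect set and so has size $\cont$.

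The one genuinely delicate point is tininess condition~(2): that every $\II$-positive Borel set $B$ is met by some translate of $L$ in $\cont$ points. For $\II\in\set{\bbk,\bbl}$ this is Lemma~\ref{cichon}, but for an arbitrary $\sid$ containing the unit spheres it can fail for a fixed line—a polynomial moment curve is $\II$-positive for some such $\II$ yet meets every line in boundedly many points—so this is where I expect the main obstacle. The honest fix is to choose the sphere-transverse perfect set \emph{inside} $B$ rather than globally: since $B\notin\II$ it is not contained in any countable union of unit spheres, and I would produce a perfect $P\sbe B$ in general position with respect to the spheres, meeting each unit sphere in at most $\omega$ points, e.g.\ by a fusion/Mycielski-type argument against the closed relation ``$n+1$ points lie on a common unit sphere''. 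For such $P$ one has $|P\cap\Sum\DD|\le\omega\cdot|\DD|<\cont$ for every family $\DD$ of fewer than $\cont$ unit spheres, whence $|B\sm\Sum\DD|\ge|P\sm\Sum\DD|=\cont$. This is the step that really uses the Euclidean structure of $\bbr^n$; once it is secured the covering hypothesis holds in the needed generality and Theorem~\ref{scover_grupa} completes the proof.
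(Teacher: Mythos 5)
Your core reduction is exactly the paper's: the published proof of this corollary is a one-liner saying one can repeat the proof of Theorem~\ref{scover_grupa}, since the sphere hypothesis is free (for $x\in\bbr^n$ the set $\set{y:d(x,y)=1}$ is a unit sphere, hence in $\II$) and since the covering hypothesis is only ever invoked --- in the choices of $t_\alpha$ and of $Y\sbe P_\alpha$ --- for families consisting of fewer than $\cont$ unit spheres together with fewer than $\cont$ points, which the paper discharges by citing Lemma~\ref{covering2}. Your observation that the covering hypothesis is needed only in this restricted form is precisely the paper's observation. Where you genuinely diverge is in what you do about Lemma~\ref{covering2} itself. The paper cites it as if it applied to the present $\II$, although it is stated and proved only for $\II\in\set{\bbk,\bbl}$: its proof rests on lines being tiny with respect to the unit spheres, and condition (2) of tininess --- that every $\II$-positive Borel set meets some translate of the line in $\cont$ points --- is supplied by Lemma~\ref{cichon} only for the meager and null ideals. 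You are right that this condition can fail for a general $\sid$ containing the spheres (your moment-curve example; equally, a circle of radius $2$ is positive for the $\sid$ generated by unit circles and singletons yet meets every line in at most two points). So your diagnosis --- that what really has to be proved is that no $\II$-positive Borel set $B$ is covered, up to fewer than $\cont$ points, by fewer than $\cont$ unit spheres, and that the transversal perfect set must be found \emph{inside} $B$ rather than taken to be a fixed line --- is sharper than the paper's own text, which simply over-reaches the stated hypotheses of Lemma~\ref{covering2}.

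That said, your repair of this step is only a sketch, and as stated it has a hole. A Mycielski-type fusion against the closed relation ``$n+1$ points lie on a common unit sphere'' requires that relation to be meager relative to the set in which the fusion is performed, and it need not be meager on $B^{n+1}$: an $\II$-positive Borel $B$ can have a relatively open piece contained in a single unit sphere (such a piece lies in $\II$, but that does not contradict positivity of $B$), and on that piece every $(n+1)$-tuple lies on a common unit sphere, so the relation has nonempty interior. One must first perform a Baire-category refinement of Solecki type: using that $B\notin\II$ implies $B$ is not coverable by countably many unit spheres and points, pass to a nonempty $G_\delta$ set $G\sbe B$ each of whose nonempty relatively open pieces is likewise non-coverable, so that every sphere trace is relatively nowhere dense in $G$; only then can one argue (via Kuratowski--Ulam, with some case analysis for degenerate configurations such as $G$ lying in a hyperplane or in a sphere of radius other than $1$) that the forbidden relation is meager and run the fusion. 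None of this is discharged in your proposal --- though in fairness the paper discharges none of it either, so your attempt, once the refinement step is added, would actually constitute a more complete argument than the one in print.
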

\begin{proof}
It is enough to observe that we can repeat the proof of Theorem~\ref{scover_grupa}.
Indeed, our choice of $Y$ (and $t_\alpha$) is possible because thanks to Lemma~\ref{covering2} after
removing less than continuum many unit spheres from an $\II$-positive Borel set we have still continuum
many points left.
\end{proof}

Just as in the case of Theorem~\ref{scover-partycja}, assuming Martin's Axiom we obtain from
Theorem~\ref{scover_grupa} a suitable $\kappa$-S-covering for every $\kappa<\cont$. However, it occurs
that we can do this uniformly on condition $\cont$ is regular.

\begin{defn}\label{scovdef}
 A family $\AA$ of pairwise disjoint subsets of an uncountable abelian Polish group $(X,+)$ is called a
 \emph{$\,<\!\kappa$-S-covering}
 $$(\fa F\in [X]^{<\kappa})(\ex t\in X)\Big(F+t\sbe\Sum\AA\,\land\,(\fa A\in\AA)|(F+t)\cap A|\le 1\Big).$$
\end{defn}

\begin{thm}\label{scover_martin_axiom}
Let $(X,+)$ be an uncountable abelian Polish group with a complete metric $d$. Let $\II\sbe\PP(X)$
be a $\sidi$ such that
  $$(\forall B\in\Borel(X)\setminus\II)(\fa \DD\in [\II]^{<\cont})\, |B\sm \Sum\DD|=\cont$$
and there exists $a\in rng(d),\ a\neq 0$ such that
  $$(\fa x\in X)\,\set{y\in X:d(x,y)=a}\in\II.$$
If for every $\kappa<\cont$ we have $2^\kappa\le\cont$
then there exists a family $\set{B_\xi:\xi<\cont}$ of pairwise disjoint subsets of $X$ such that
 \begin{enumerate}
  \item $(\fa\xi<\cont)\,B_\xi$ is a completely $\II$-nonmeasurable set,
  \item $(\fa\xi<\cont)\,B_\xi$ is not a $2$-covering,
  \item $\set{B_\xi:\xi<\cont}$ is a $\,<\!\cont$-S-covering.
 \end{enumerate}
\end{thm}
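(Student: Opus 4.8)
The plan is to run a single recursion of length $\cont$ that fuses the stepwise idea of Theorem~\ref{scover_grupa} (distributing translated copies of sets across disjoint blocks while dodging distance-$1$ pairs and unit spheres) with a $\cont$-length bookkeeping in the spirit of Theorem~\ref{cofi}. The engine of the whole argument is the observation that the hypothesis already \emph{forces $\cont$ to be regular}. Indeed, if $\cf(\cont)=\lambda<\cont$, then since $\lambda=\cf(2^\omega)>\omega$ we get $2^\lambda\ge 2^\omega=\cont$, while the hypothesis gives $2^\lambda\le\cont$; hence $2^\lambda=\cont$, and then K\"onig's theorem yields $\cf(2^\lambda)>\lambda$, i.e. $\cf(\cont)>\lambda$, a contradiction. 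The same hypothesis gives $|[X]^{<\cont}|=\cont$, because $\cont^\kappa=2^\kappa\le\cont$ for $\omega\le\kappa<\cont$ and there are $\cont$ many $\kappa<\cont$. These two consequences are exactly what make a $\cont$-length recursion with $<\!\cont$-sized steps legitimate, and I expect isolating them to be the conceptual heart of the proof.

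The setup I would use: without loss of generality $a=1$. Fix enumerations $\Borel(X)\sm\II=\set{P_\beta:\beta<\cont}$ and $[X]^{<\cont}=\set{F_\alpha:\alpha<\cont}$, together with a bijection $\alpha\mapsto(\xi_\alpha,\beta_\alpha)$ of $\cont$ onto $\cont\times\cont$. By transfinite recursion I build increasing families $\set{A^\alpha_\xi:\xi<\cont}$ of pairwise disjoint sets, writing $A=\Sum_\xi\Sum_{\gamma<\alpha}A^\gamma_\xi$ for the union accumulated before stage $\alpha$, and maintaining two invariants throughout: the blocks stay pairwise disjoint, and no block ever contains two points at distance $1$. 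At stage $\alpha$ I perform two tasks. \emph{(Nonmeasurability.)} I pick a point $p\in P_{\beta_\alpha}$ with $p\notin A$ and $d(p,q)\ne 1$ for every $q$ already placed in block $\xi_\alpha$, and I add $p$ to that block; this is possible because the forbidden set is $<\!\cont$ points together with $<\!\cont$ unit spheres, all lying in $\II$, so by the covering hypothesis $P_{\beta_\alpha}$ minus the forbidden set still has size $\cont$. \emph{(S-covering.)} Writing $\lambda=|F_\alpha|<\cont$, I choose a translation
\[
t_\alpha\in X\sm T,\qquad T=(A-F_\alpha)\cup\set{t\in X:(\ex x\in F_\alpha)(\ex a\in A)\,d(t+x,a)=1};
\]
here $A-F_\alpha$ has size $<\!\cont$ and the second set is a union of $<\!\cont$ unit spheres, so $T$ omits $\cont$ many points and $t_\alpha$ exists. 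Then $t_\alpha+F_\alpha$ is disjoint from $A$ and at distance $\ne 1$ from every point of $A$; I enumerate its $\lambda$ points and drop them into $\lambda$ \emph{distinct} fresh blocks, one point per block.

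Because each stage adds at most $1+|F_\alpha|<\cont$ points and $\cont$ is regular, $|A|<\cont$ is preserved at every stage $\alpha<\cont$, which is precisely what all the above selections require. Setting $B_\xi=\Sum_{\alpha<\cont}A^\alpha_\xi$, disjointness is immediate. Each $B_\xi$ meets every $\II$-positive Borel set $P_\beta$, since the pair $(\xi,\beta)$ is treated at its coding stage; by the Borel-base property this upgrades to $P_\beta\cap B_\xi\notin\II$, and using any second block inside $P_\beta\sm B_\xi$ also $P_\beta\cap(X\sm B_\xi)\notin\II$, so $B_\xi$ is completely $\II$-nonmeasurable. No block contains a distance-$1$ pair, so (invariance of $d$, exactly as in Theorem~\ref{scover_grupa}) no two-point set at distance $1$ can be translated into $B_\xi$, whence $B_\xi$ is not a $2$-covering. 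Finally, for any $F\in[X]^{<\cont}$ we have $F=F_\alpha$ for some $\alpha$, and $t_\alpha+F_\alpha\sbe\Sum_\xi B_\xi$ with each block met at most once, so $\set{B_\xi:\xi<\cont}$ is a $<\!\cont$-S-covering.

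The main obstacle, as flagged, is not any single geometric estimate (those are routine given Remark~\ref{rem_add} and the unit-sphere hypothesis) but the combinatorics of servicing $\cont$ nonmeasurability demands spread over $\cont$ blocks together with $\cont$ S-covering demands, all inside one recursion, without letting any stage's accumulated set reach $\cont$. This succeeds only because the cardinal hypothesis simultaneously makes $\cont$ regular (so a $<\!\cont$-union of $<\!\cont$-sets stays $<\!\cont$) and makes $[X]^{<\cont}$ have size $\cont$ (so every S-covering target can be listed in $\cont$ steps); pinning down those two facts is where I would concentrate the care. If desired, one may replace the covering hypothesis by the stronger $\add(\II)=\cont$, exactly as in Remark~\ref{rem_add}, or specialize to $\bbr^n$ with all unit spheres in $\II$, invoking Lemma~\ref{covering2} in place of the covering assumption as in Corollary~\ref{cor_grupa}.
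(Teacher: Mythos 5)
Your proof is correct and takes essentially the same route as the paper's: a length-$\cont$ transfinite recursion that feeds points into pairwise disjoint blocks, using the covering hypothesis to dodge $<\!\cont$ many unit spheres and previously chosen points, and translating each $F_\alpha$ so that every block meets the translate at most once, with the same final verifications (Borel base for complete $\II$-nonmeasurability, the no-distance-$1$-within-a-block invariant against $2$-coverings). The only divergence is bookkeeping: the paper activates block $\xi$ at stage $\xi$, enumerates $[X]^{<\cont}$ with each set appearing cofinally often, and defers $F_\alpha$ until $|F_\alpha|\le\alpha$ so that enough blocks exist, whereas you schedule the nonmeasurability demands by a pairing function and drop translates into fresh blocks, leaning on the regularity of $\cont$ --- which you correctly derive from the hypothesis $2^\kappa\le\cont$, making explicit a point the paper leaves implicit.
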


\begin{proof}
 Let $\Borel(X)\sm\II=\set{P_\alpha:\alpha<\cont}$ be an enumeration of all $\II$-positive Borel subsets of $X$.
 We also enumerate the set $[X]^{<\cont}=\set{F_\alpha:\alpha<\cont}$ in such way that every $F\in [X]^{<\cont}$
 appears in this enumeration cofinally often.
 By transfinite induction we construct a matrix $(A^\eta_\xi)_{\xi,\eta<\cont}$ of subsets of $X$ of
 size less than continuum such that for every $\alpha<\cont$ the following conditions are fulfilled:
 \begin{enumerate}
  \item for every $\eta\le\alpha$ and every different $\xi_1,\xi_2\le\alpha$ the sets
  $A^\eta_{\xi_1}$ and $A^\eta_{\xi_2}$ are disjoint;
  \item for every $\xi,\eta\le\alpha$ the intersection $A^\eta_\xi\cap P_\eta$ is nonempty;
  \item for every $\xi\le\alpha$ and every $\eta_1<\eta_2\le\alpha$ we have
  $A^{\eta_1^{\ }}_\xi\sbe A^{\eta_2^{\ }}_\xi$;
  \item for every $\xi,\eta\le\alpha$ and every $x,y\in A^\eta_\xi$ we have $d(x,y)\ne 1$;
  \item if $|F_\alpha|\le\alpha$ then there exists $t_\alpha\in X$ such that
  $t_\alpha+F_\alpha\sbe\Sum_{\xi\le\alpha}A^\alpha_\xi$ and for every $\xi\le\alpha$ we have
  $|(t_\alpha+F_\alpha)\cap A^\alpha_\xi|\le 1$.
 \end{enumerate}
Our construction is similar to this from the proof of Theorem~\ref{scover_grupa}. Adding new points
we have to take care that they are different from the ones constructed before and that they do not belong to any
unit sphere with a center in an old point, which is always possible because of the assumptions.



Let $B_\xi=\Sum_{\xi\le\alpha<\cont}A^\alpha_\xi$ for $\xi<\cont$. As in the previous theorems, the sets $B_\xi$
are pairwise disjoint and none of them is a 2-covering. Moreover, for every $F\in [X]^{<\cont}$ there exists
$\alpha<\cont$ such that $F=F_\alpha$ and $|F_\alpha|\le\alpha$, so by the condition (5) there exists $t=t_\alpha$
being a witness for that the family $\set{B_\xi:\xi<\cont}$ is a $\,<\!\cont$-S-covering. Finally,
by the condition (2) every set $B_\xi$ intersects all $\II$-positive Borel subsets of $X$. Hence the set $B_\xi$ is 
completely $\II$-nonmeasurable for any $\xi<\cont$.
\end{proof}

As a corollary we obtain a result concerning an S-covering made of Lebesgue completely nonmeasurable sets in $\bbr^n.$

\begin{cor}
Assume Martin's Axiom and $\cont=\aleph_2$. Then there exists a family $\set{B_\xi:\xi<\cont}$ of pairwise
disjoint subsets of $\,\bbr^n$ such that
 \begin{enumerate}
  \item $(\fa\xi<\cont)\,\lambda_*(B_\xi)=0$ and $\lambda_*(\bbr^n\sm B_\xi)=0$,
  \item $(\fa\xi<\cont)\,B_\xi$ is not a $2$-covering,
  \item $\set{B_\xi:\xi<\cont}$ is a $\omega_1$-S-covering,
 \end{enumerate}
where $\lambda_*$ denotes the inner Lebesgue measure in $\bbr^n$.
\end{cor}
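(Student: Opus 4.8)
The plan is to derive this corollary directly from Theorem~\ref{scover_martin_axiom} applied to $X=\bbr^n$ equipped with the Euclidean metric $d$ and to $\II=\NN$, the $\sigma$-ideal of Lebesgue null subsets of $\bbr^n$. First I would record that this data satisfies the standing hypotheses of Section~1: $(\bbr^n,+)$ is an uncountable abelian Polish group, $d$ is a complete metric, and $\NN$ is a proper, translation-invariant $\sigma$-ideal containing singletons and with a Borel base. It then remains to verify the two structural assumptions of the theorem together with its cardinal-arithmetic hypothesis.

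For the sphere assumption I would take $a=1$: for every $x\in\bbr^n$ the set $\set{y\in\bbr^n:d(x,y)=1}$ is an $(n-1)$-dimensional sphere and hence Lebesgue null, so $\set{y:d(x,y)=1}\in\NN$, as required. For the covering assumption
$$(\fa B\in\Borel(\bbr^n)\sm\NN)(\fa\DD\in[\NN]^{<\cont})\;|B\sm\Sum\DD|=\cont,$$
I would appeal to Remark~\ref{rem_add}, according to which it suffices to know that $\add(\NN)=\cont$. Under Martin's Axiom one indeed has $\add(\NN)=\cont$, so any union of fewer than $\cont$ null sets is null; hence $B\sm\Sum\DD$ differs from $B$ by a null set, still has positive Lebesgue measure, and therefore has cardinality $\cont$.

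The remaining hypothesis requires $2^\kappa\le\cont$ for every $\kappa<\cont$. Since we assume $\cont=\aleph_2$, the only infinite cardinals below $\cont$ are $\aleph_0$ and $\aleph_1$, and Martin's Axiom gives $2^\kappa=\cont$ for every infinite $\kappa<\cont$; thus $2^{\aleph_0}=2^{\aleph_1}=\aleph_2=\cont$ and the hypothesis holds. Feeding all of this into Theorem~\ref{scover_martin_axiom} produces a family $\set{B_\xi:\xi<\cont}$ of pairwise disjoint subsets of $\bbr^n$, each completely $\NN$-nonmeasurable, none of them a $2$-covering, and together forming a $\,<\!\cont$-S-covering.

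It only remains to rephrase these three conclusions in the language of the corollary. By the characterization of complete $\NN$-nonmeasurability recorded right after the definition of complete $\II$-nonmeasurability in Section~1 (in its evident $\bbr^n$ version), the statement that $B_\xi$ is completely $\NN$-nonmeasurable is equivalent to $\lambda_*(B_\xi)=0$ and $\lambda_*(\bbr^n\sm B_\xi)=0$, which is item~(1); item~(2) is immediate. For item~(3) I would observe that $\omega_1<\cont=\aleph_2$, so every $F\in[\bbr^n]^{\omega_1}$ lies in $[\bbr^n]^{<\cont}$ and is therefore S-covered by the family; hence the $\,<\!\cont$-S-covering is in particular an $\omega_1$-S-covering. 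I expect the single delicate point to be precisely this last step: because the family has $\aleph_2$ members while $|F|=\omega_1$, almost all members miss $F+t$, so ``$\omega_1$-S-covering'' must here be read with the $|(F+t)\cap A|\le1$ clause of the $\,<\!\kappa$-S-covering definition rather than with the exact-equality clause of the original $\kappa$-S-covering definition.
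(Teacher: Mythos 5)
Your proposal is correct and follows essentially the same route as the paper, whose entire proof is the one-line observation that the corollary is immediate from Theorem~\ref{scover_martin_axiom} and Remark~\ref{rem_add} together with the fact that Martin's Axiom gives $\add(\NN)=\cont$ (and, with $\cont=\aleph_2$, that $2^\kappa\le\cont$ for all $\kappa<\cont$); you have simply spelled out the hypothesis checks the paper leaves implicit. Your closing caveat is also on target: since the family has $\cont=\aleph_2$ members while $|F|=\omega_1$, item~(3) can only be read via the $\,<\!\kappa$-S-covering clause $|(F+t)\cap A|\le 1$, with every $F\in[\bbr^n]^{\omega_1}$ belonging to $[\bbr^n]^{<\cont}$ -- an imprecision in the paper's statement that your reading resolves in the intended way.
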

\begin{proof}
Immediate from Theorem~\ref{scover_martin_axiom}, Corollary~\ref{cor_grupa} and Remark~\ref{rem_add} together
with the fact that under Martin's Axiom the additivity of the $\sid$ of Lebesgue null sets is equal to
continuum.
\end{proof}

\section{I-coverings on the plane}

In this chapter we focus our attention on the plane $\bbr^2$ treated as a Polish group. According to
Definition~\ref{covdef} we can investigate a $\kappa$-covering as a subset of the plane such that every
planary set of size $\kappa$ can be translated into it. However, we may also generalize this definition
letting sets of size $\kappa$ to be not only translated but moved by any isometry.

\begin{defn}
 We say that a set $A\sbe\bbr^2$ is a $\kappa$-I-covering if
 $$(\fa B\in[\bbr^2]^\kappa)(\ex\phi:\bbr^2\rightarrow\bbr^2)
   (\phi \mbox{ is an isometry and } \phi[B]\sbe A).$$
\end{defn}

It occurs that we cannot partition the plane into two sets none of which is a 2-I-covering.

\begin{thm}
 If $\set{A_0,A_1}$ is a partition of $\bbr^2$ then one of the sets $A_0,A_1$ is a $2$-I-covering.
\end{thm}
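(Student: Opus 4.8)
The plan is to reduce the statement about $2$-I-coverings to a statement about distances. A two-element subset of $\bbr^2$ is just a pair of distinct points, and the only isometry invariant of such a pair is the distance between them; moreover the isometry group of the plane acts transitively on pairs of points at any prescribed distance. Hence a set $A\sbe\bbr^2$ is a $2$-I-covering if and only if $A$ \emph{realizes every distance}, i.e. for every $d>0$ there are two points of $A$ at distance exactly $d$. So the theorem becomes: in any partition $\bbr^2=A_0\cup A_1$, at least one $A_i$ realizes every distance.

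First I would dispose of the trivial case: if one of $A_0,A_1$ is empty, the other equals $\bbr^2$ and realizes every distance. So assume both are nonempty and, arguing by contradiction, suppose neither realizes every distance. Fix $a>0$ not realized by $A_0$ and $b>0$ not realized by $A_1$; thus no two points of $A_0$ lie at distance $a$ and no two points of $A_1$ lie at distance $b$.

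The key step is a \emph{circle trick}. Pick any $O\in A_0$. Every point of the circle $\set{x:|x-O|=a}$ is at distance $a$ from $O$, hence cannot lie in $A_0$ (as $A_0$ avoids distance $a$), so the whole circle is contained in $A_1$. A circle of radius $a$ has chords of every length in $(0,2a]$; therefore if $b\le 2a$ we may pick two of its points at distance exactly $b$, and both lie in $A_1$ --- contradicting that $A_1$ avoids $b$. Symmetrically, choosing $O'\in A_1$ places the circle of radius $b$ about $O'$ inside $A_0$, and if $a\le 2b$ we find two points of $A_0$ at distance $a$, again a contradiction.

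Finally I would observe that the two cases exhaust all possibilities: we cannot simultaneously have $b>2a$ and $a>2b$, for these would give $b>2a>4b$, which is absurd. Hence at least one of $b\le 2a$, $a\le 2b$ holds, and in either case we reach a contradiction. The only genuinely idea-requiring step --- as opposed to routine checking --- is the circle trick combined with the remark that the disjunction $a\le 2b\lor b\le 2a$ is automatic; the reduction to distances and the resulting case analysis are then straightforward.
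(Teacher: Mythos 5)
Your proof is correct, and it closes the argument by a genuinely different route than the paper, although both hinge on the same key observation (the circle trick): if $A_0$ omits the distance $a$, then the entire circle of radius $a$ about any point of $A_0$ must lie in $A_1$. Your preliminary reduction --- $A$ is a $2$-I-covering iff $A$ realizes every positive distance, by transitivity of the isometry group on pairs at a prescribed distance --- is also implicit in the paper. The paper then proceeds asymmetrically and directly: assuming $A_0$ omits $d$, it places the circle $C$ of radius $d$ about a point $a\in A_0$ inside $A_1$, lays down collinear points $a_n$ with $d(a,a_n)=(n+2)d$, notes that of any two consecutive $a_n$ (which are $d$ apart) at least one is in $A_1$, and that distances from $a_n$ to points of $C$ sweep out the overlapping intervals $[(n+1)d,(n+3)d]$; together with chords of $C$ for small distances, this exhibits for \emph{every} $x>0$ a pair of points of $A_1$ at distance $x$, so $A_1$ is a $2$-I-covering outright. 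You instead symmetrize and argue by contradiction: with $A_0$ omitting $a$ and $A_1$ omitting $b$, you apply the circle trick twice (the radius-$a$ circle about a point of $A_0$ sits in $A_1$; the radius-$b$ circle about a point of $A_1$ sits in $A_0$), invoke the chord-length ranges $(0,2a]$ and $(0,2b]$, and finish with the automatic disjunction $b\le 2a\ \lor\ a\le 2b$ (otherwise $b>2a>4b$, absurd). Your version is shorter and avoids the paper's bookkeeping with the sequence $(a_n)$ and interval overlaps; the paper's version is more informative in that it constructs explicit witnesses in $A_1$ for each distance rather than deriving a bare contradiction, though the two conclusions are logically equivalent. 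You also explicitly handle the degenerate case of an empty cell, which the paper's opening step (``fix any $a\in A_0$'') silently presupposes. Both proofs are sound.
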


\begin{proof}
Suppose that $A_0$ is not a 2-I-covering. Then there exists a positive real $d$ such that none two points
in $A_0$ are at a distance of $d$ from each other. Let us fix any $a\in A_0$ and consider a circle $C$ with a center
$a$ and a radius equal to $d$. Next, let us fix a halfline that starts from $a$ and consider such a sequence
$(a_n)_{n<\omega}$ of elements of this halfline that $d(a,a_n)=(n+2)d$ for all $n<\omega$. Then for every real
$x\in[(n+1)d,(n+3)d]$ there exists a point $p\in C$ such that $d(p,a_n)=x$.

Observe now that $C\sbe A_1$. Moreover, at least one of every two consecutive elements of the sequence
$(a_n)_{n<\omega}$ belongs to $A_1$. Hence for every $x>0$ we can find two elements of $A_1$ which
are at a distance of $x$ from each other. Consequently, the set $A_1$ is a 2-I-covering.
\end{proof}

Next two theorems show that from the point of view of Bernstein sets there is a big difference between
2-I-coverings and 3-I-coverings.

\begin{thm}\label{2icov}
Every Bernstein set is a $2$-I-covering.
\end{thm}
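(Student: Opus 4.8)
The plan is to reformulate the conclusion metrically. Since every isometry $\phi$ of $\bbr^2$ preserves distances, and conversely any two ordered pairs of points at the same positive distance are carried onto one another by some isometry, a set $A\sbe\bbr^2$ is a $2$-I-covering if and only if it realises every positive distance, that is
$$(\fa d>0)(\ex x,y\in A)\;\;|x-y|=d.$$
Indeed, a two-element set $\set{p,q}\in[\bbr^2]^2$ is determined up to isometry by the single number $d=|p-q|>0$, so there is an isometry $\phi$ with $\phi[\set{p,q}]\sbe A$ precisely when $A$ contains two points at distance $d$. Hence it is enough to fix an arbitrary $d>0$ and exhibit two points of the given Bernstein set $B$ at distance $d$ from each other.

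To produce them, first I would note that $B$ is nonempty: any Euclidean circle is an uncountable closed, hence Borel, subset of $\bbr^2$, so the Bernstein property forces $B$ to meet it. Fix any point $x\in B$ and consider the circle
$$C_x=\set{y\in\bbr^2:|x-y|=d}$$
of radius $d$ centred at $x$. Being an uncountable closed set with no isolated points, $C_x$ is a perfect set, $C_x\in\Perf(\bbr^2)$, and therefore $C_x\cap B\neq\es$ by the Bernstein property. Choosing $y\in C_x\cap B$ gives $x,y\in B$ with $|x-y|=d$, as required.

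I do not expect a genuine obstacle here: the whole content is the observation that if $B$ avoided the distance $d$, then for any $x\in B$ the entire circle $C_x$ would lie in the complement of $B$, contradicting the fact that a Bernstein set meets every perfect set. The only steps to check are routine --- that a Euclidean circle is indeed an uncountable perfect set (so that the Bernstein property applies to it), and that the isometry group of $\bbr^2$ acts transitively on ordered pairs of points at a fixed distance (which legitimises the reduction above). Both are standard, so the argument will be complete.
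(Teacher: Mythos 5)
Your proposal is correct and takes essentially the same approach as the paper: fix a point of the Bernstein set $B$ and observe that the circle of radius $d(a,b)$ centred there is a perfect set, which $B$ must therefore meet. Your explicit reduction of the $2$-I-covering condition to realising every positive distance, and the remark that $B$ is nonempty, merely spell out steps the paper leaves implicit.
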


\begin{proof}
Let $B\sbe\bbr^2$ be a Bernstein set. To show that $B$ is also a $2$-I-covering let us fix two different
points $a,b\in\bbr^2$. It is enough to observe that any circle with a center in a fixed point $c\in B$
and a radius $d(a,b)$ (where $d$ stands for a standard Euclidean metric) is a perfect set, thus meets $B$.
\end{proof}

\begin{thm}
There exists a Bernstein set which is not a $3$-I-covering.
\end{thm}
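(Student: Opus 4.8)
The plan is to fix a nondegenerate triangle $T=\set{p_0,p_1,p_2}\sbe\bbr^2$ and to construct a Bernstein set $B$ that contains no isometric copy of $T$. Since any isometry $\phi$ of the plane sends $T$ onto a triangle congruent to $T$, such a $B$ can never satisfy $\phi[T]\sbe B$ for any isometry $\phi$; taking the three-point set $T$ itself as a witness, this shows that $B$ is not a $3$-I-covering. So the whole problem reduces to building a Bernstein set avoiding all congruent copies of one fixed triangle.

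I would build $B$ by the usual Bernstein recursion of length $\cont$. Fix an enumeration $\Perf(\bbr^2)=\set{P_\alpha:\alpha<\cont}$ and construct increasing sequences $(B_\alpha)_{\alpha<\cont}$ and $(C_\alpha)_{\alpha<\cont}$ of subsets of $\bbr^2$, each of size $|\alpha|$ and with $B_\alpha\cap C_\alpha=\es$, where the points of $B_\alpha$ are destined for $B$ and those of $C_\alpha$ for its complement. At stage $\alpha$ I throw one point of $P_\alpha$ into $B_{\alpha+1}$ and another into $C_{\alpha+1}$, so that in the end both $B=\Sum_{\alpha<\cont}B_\alpha$ and its complement meet every perfect set; this alone guarantees that $B$ is a Bernstein set. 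The extra demand is that the point added to $B$ must never complete an isometric copy of $T$ together with two points already in $B$.

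The crucial observation — and the step I expect to be the main point to get right — is the following counting fact: for any two fixed points $y,z\in\bbr^2$, the set of $x$ for which $\set{x,y,z}$ is congruent to $T$ is finite. Indeed, such an $x$ must lie at one of the side-lengths of $T$ from each of $y$ and $z$, hence in the intersection of two circles, and there are only finitely many admissible pairs of radii; so finitely many such $x$ exist. Consequently the forbidden set
$$F_\alpha=\set{x\in\bbr^2:(\ex y,z\in B_\alpha)\ \set{x,y,z}\ \text{is congruent to}\ T}$$
has cardinality at most $|B_\alpha|^2\cdot\omega<\cont$. Because $|P_\alpha|=\cont$, I can then pick the new $B$-point in $P_\alpha\sm(F_\alpha\cup B_\alpha\cup C_\alpha)$ and afterwards the new $C$-point in $P_\alpha$ distinct from everything already chosen, so the recursion never gets stuck.

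Finally I would check that the construction succeeds. Any triple of points of $B=\Sum_{\alpha<\cont}B_\alpha$ has a vertex added last, and that vertex was chosen outside the corresponding $F_\alpha$, so no triple in $B$ is congruent to $T$; thus $B$ contains no isometric copy of $T$. Together with the Bernstein property secured by the two-sided meeting of every perfect set, this shows that $B$ is a Bernstein set which is not a $3$-I-covering. The only genuinely nontrivial ingredient is the finiteness in the counting step, which keeps $F_\alpha$ below $\cont$ and therefore compatible with the standard Bernstein bookkeeping; everything else is routine.
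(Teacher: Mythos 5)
Your proposal is correct and takes essentially the same route as the paper: a transfinite Bernstein recursion of length $\cont$ in which the new $B$-point is chosen to avoid completing a congruent copy of a fixed triangle, with the key counting step being exactly the paper's observation that for each pair of already-chosen points only finitely many points (intersections of two circles with distinct centers) can complete such a copy, so the forbidden set has size below $\cont$. The only cosmetic difference is that the paper fixes $T$ to be the equilateral triangle of side $1$, while you allow an arbitrary nondegenerate triangle.
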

\begin{proof}
Let $\Perf(\bbr^2)=\set{P_\alpha:\alpha<\cont}$ be a fixed enumeration of all perfect subsets of $\bbr^2$.
We build by transfinite induction two sequences $(a_\alpha)_{\alpha<\cont},(b_\alpha)_{\alpha<\cont}$ of
elements of the plane satisfying the following conditions:
 \begin{enumerate}
    \item $(\fa \alpha<\cont)\,a_\alpha,b_\alpha \in P_\alpha$,
    \item $\set{a_\alpha:\alpha<\cont}\cap\set{b_\alpha:\alpha<\cont}=\es$,
    \item $(\fa \alpha,\beta,\gamma<\cont)(d(a_\alpha,a_\beta)\ne 1\,\lor\,d(a_\alpha,a_\gamma)\ne 1\,
    \lor\,d(a_\beta,a_\gamma)\ne 1)$.
 \end{enumerate}
Suppose that we have already constructed $(a_\xi)_{\xi<\alpha}$ and $(b_\xi)_{\xi<\alpha}$ for some $\alpha<\cont$.
Since the set $A=\set{(a_{\xi_1}, a_{\xi_2}):\xi_1,\xi_2<\alpha\,\land\,d(a_{\xi_1},a_{\xi_2})= 1}$ has at most
$|\alpha\times\alpha|<\cont$ elements and for every pair $(a_{\xi_1}, a_{\xi_2})\in A$  there are only two points
with distance 1 from both $a_{\xi_1}$ and $a_{\xi_2}$ we can pick
$a_\alpha\in P_\alpha\sm(\set{a_\xi: \xi<\alpha}\cup\set{b_\xi: \xi<\alpha})$ such that
$d(a_\alpha,a_{\xi_1})\neq 1$ or $d(a_\alpha,a_{\xi_2})\neq 1$ for all $\xi_1,\xi_2<\alpha$. Let $b_\alpha$ be any
element of $P_\alpha\sm(\set{a_\xi:\xi\le\alpha}\cup\set{b_\xi:\xi<\alpha})$.

Let us put $B=\set{a_\alpha:\alpha<\cont}$. The condition (2) ensures $B$ is a Bernstein set. To show that $B$
is not a $3$-I-covering it is enough to observe that there is no equilateral triangle of sides of length 1 with
all vertices in $B$.
\end{proof}

When we replace Bernstein sets by completely $\II$-nonmeasurable sets then it occurs that the theorem analogous
to Theorem~\ref{2icov} may not be true.

\begin{thm}
Let $\II\sbe\PP(\bbr^2)$ be a $\sidi$ such that every unit circle is in $\II$.
Then there exists a completely $\II$-nonmeasurable set which is not a $2$-I-covering.
\end{thm}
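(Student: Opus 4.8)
The plan is to build the set by transfinite recursion over an enumeration of all $\II$-positive Borel sets, exactly as in the Bernstein-set theorem just proved, but with a distance constraint that kills the $2$-I-covering property. Since $\II$ contains every unit circle, I can forbid points at distance $1$ while still always having room to make choices.

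First I would fix an enumeration $\Borel(\bbr^2)\sm\II=\set{P_\alpha:\alpha<\cont}$ of all $\II$-positive Borel sets. By transfinite induction I construct two increasing sequences of sets $(C_\alpha)_{\alpha<\cont}$ (``inside'' points) and $(D_\alpha)_{\alpha<\cont}$ (``outside'' points), each of size $\le|\alpha|\cdot\omega$, so that at stage $\alpha$ the following hold: (i) $C_\alpha\cap D_\alpha=\es$; (ii) $C_\alpha\cap P_\alpha\ne\es$ and $D_\alpha\cap P_\alpha\ne\es$; and the crucial covering-killing condition (iii) $(\fa x,y\in C_\alpha)\,d(x,y)\ne 1$. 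At the successor step I must pick a new point $c\in P_\alpha$ to put into $C_\alpha$ with $d(c,x)\ne 1$ for all $x$ already in $C_{<\alpha}$, and a new point $d\in P_\alpha$ for $D_\alpha$, both avoiding the previously used points.

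The key point is that these choices are always possible. The set of points $x$ in the plane with $d(x,c_0)=1$ for some fixed old point $c_0$ is a unit circle, hence lies in $\II$; since at stage $\alpha$ there are fewer than $\cont$ old points, the union of all these forbidden unit circles is a union of $<\cont$ sets from $\II$. By the hypothesis (valid here because $\II$ contains all unit circles, so the relevant covering lemma applies — cf. Lemma~\ref{covering2} in the Euclidean case) removing fewer than continuum many unit spheres from the $\II$-positive Borel set $P_\alpha$ still leaves continuum many points. Together with throwing away the $<\cont$ many old points of $C_{<\alpha}\cup D_{<\alpha}$, this guarantees $P_\alpha$ still has a legal point for $C_\alpha$, and trivially a point for $D_\alpha$.

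Finally I set $B=\Sum_{\alpha<\cont}C_\alpha$. By condition (ii) both $B$ and its complement (which contains every $D_\alpha$) meet every $\II$-positive Borel set in a set that is $\II$-positive — so $B$ is completely $\II$-nonmeasurable; this part is routine once (ii) is arranged to be inherited by the limit (one verifies $B\cap P$ and $P\sm B$ are $\II$-positive for every $\II$-positive Borel $P$ exactly as in the earlier constructions). By condition (iii), no two points of $B$ are at distance $1$, so the two-point set $\set{(0,0),(1,0)}$ admits no isometric copy inside $B$, whence $B$ is not a $2$-I-covering. The main obstacle is verifying that the distance constraint is compatible with complete $\II$-nonmeasurability at every stage, i.e. that the forbidden unit circles never exhaust an $\II$-positive Borel set; this is precisely where the assumption that unit circles lie in $\II$ is used, via the covering lemma.
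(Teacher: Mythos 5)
Your proposal is correct and follows essentially the same route as the paper's own proof: a transfinite recursion over an enumeration $\set{B_\alpha:\alpha<\cont}$ of the $\II$-positive Borel sets, at each stage picking an ``inside'' point of $B_\alpha$ off the fewer-than-continuum forbidden unit circles centred at previously chosen inside points (justified by Lemma~\ref{covering2}) together with a separate ``outside'' point, so that the resulting set meets every $\II$-positive Borel set on both sides while containing no pair at distance $1$. The only difference is cosmetic: the paper records the choices as two point sequences $(a_\alpha)_{\alpha<\cont}$, $(b_\alpha)_{\alpha<\cont}$ rather than as increasing sets $C_\alpha$, $D_\alpha$.
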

\begin{proof}
Let $\Borel(X)\sm\II=\set{B_\alpha:\alpha<\cont}$ be an enumeration of all $\II$-positive Borel subsets of $X$.
We build by transfinite induction two sequences $(a_\alpha)_{\alpha<\cont},(b_\alpha)_{\alpha<\cont}$ of
elements of the plane satisfying the following conditions:
 \begin{enumerate}
    \item $(\fa \alpha<\cont)\,a_\alpha,b_\alpha \in B_\alpha$,
    \item $\set{a_\alpha:\alpha<\cont}\cap\set{b_\alpha:\alpha<\cont}=\es$,
    \item $(\fa \alpha,\beta<\cont)\,d(a_\alpha,a_\beta)\ne 1$.
 \end{enumerate}
Assume that we are at an $\alpha$th step of the construction. Let
 $$D=B_\alpha\sm\Sum_{\beta<\alpha}\set{a\in\bbr^2: d(a,a_\beta)=1}.$$
From Lemma~\ref{covering2} we get $|D|=\cont$. Let us pick $a_\alpha\in D\sm\set{a_\beta:\beta<\alpha}$ and
let $b_\alpha\in B_\alpha\sm(\set{a_\beta:\beta\le\alpha}\cup\set{b_\beta:\beta<\alpha})$.

Finally, the set $B=\set{a_\alpha:\alpha<\cont}$ is completely $\II$-nonmeasurable and not a $2$-I-covering.
\end{proof}

\end{document}